\newtheorem{lemma}{Lemma}
\newtheorem{theorem}{Theorem}
\newtheorem{corollary}{Corollary}
\newtheorem{proposition}{Proposition}
\newcommand{\Aut}{\mathrm{Aut}}
\newcommand{\rank}{\mathrm{rank}}
\newcommand{\wt}{\mathrm{wt}}
\newcommand{\GL}{\mathrm{GL}}
\newcommand{\GA}{\mathrm{GA}}
\newcommand{\Ker}{\mathrm{Ker}}
\begin{document}
\renewcommand{\refname}{References}
\renewcommand{\proofname}{Proof.}
\renewcommand{\tablename}{Table}
\renewcommand{\dim}{\mathrm{dim}}

\title[A concatenation construction for propelinear perfect codes
]{A concatenation construction for propelinear perfect codes
}
\author{{I.Yu.Mogilnykh, F. I. Solov'eva}}%

\address{Ivan Yurevich Mogilnykh
\newline\hphantom{iii} Tomsk State University, Regional Scientific and Educational Mathematical Center,
\newline\hphantom{iii} pr. Lenina, 36,
\newline\hphantom{iii}634050, Tomsk, Russia,
\newline\hphantom{iii} Sobolev Institute of Mathematics,
\newline\hphantom{iii} pr. Koptyuga, 4,
\newline\hphantom{iii} 630090, Novosibirsk, Russia}%
\email{ivmog@math.nsc.ru}

\address{Faina I. Solov'eva
\newline\hphantom{iii} Sobolev Institute of Mathematics,
\newline\hphantom{iii} pr. ac. Koptyuga 4,
\newline\hphantom{iii} 630090, Novosibirsk, Russia}%
\email{sol@math.nsc.ru}%

\thanks{\copyright \ 2019  I.Yu.Mogilnykh, F. I. Solov'eva}
\thanks{\rm The work was supported by the Ministry of Education and Science of Russia (state assignment No.
1.13557.2019/13.1).}



\maketitle

\begin{quote}
{\small \noindent{\sc Abstract.} A code $C$ is called propelinear
if there is a subgroup of $\Aut(C)$ of order $|C|$ acting
transitively on the codewords of $C$. In the paper new propelinear
perfect binary codes of any admissible length more than $7$ are
obtained by a particular case of the Solov'eva concatenation
construction--1981 and the regular subgroups
of the general affine group
 of the vector space over $\mbox{GF}(2)$.

\medskip

\noindent{\bf Keywords:}  Hamming code, perfect code,
concatenation construction, propelinear code, Mollard code,
 regular subgroup, transitive action}
\end{quote}

\section{Introduction}

 The vector space of dimension $n$ over the
Galois field $F$ of two elements  with respect to the Hamming
metric is denoted by $F^n$.
 The {\it Hamming distance} between
any two vectors $x,y\in F^n$ is defined as the number of
coordinates in which $x$ and $y$ differ. {\it The support} of a
vector $x$ from $F^n$ denoted by $\mbox{supp}(x)$ is the
collection of the indices of its nonzero coordinate positions.
{\it The Hamming weight} $\wt(x)$
 of a vector $x$ is the size of its support.   A  {\it code} of length $n$
is an arbitrary set of vectors of $F^n$ that are called {\it
codewords} of $C$.
The {\it code distance} of a code is the minimum
value of the Hamming distance between two different codewords from
the code. A code $C$ is called {\it perfect binary
single-error-correcting}
 (briefly  {\it perfect}) if for any vector $x$ from  $F^n$  there
 exists exactly one vector $y
\in C$ at the Hamming distance not more than 1 from the vector
$x$. A perfect linear code is called the {\it Hamming code}.
Adding the overall parity check to all codewords of a code of
length $n$ we obtain the code of length $n+1$ that is called {\it extended}.

Let $x$ be a binary vector of $F^n$, $\pi$ be a permutation of the
coordinate positions of vectors in $F^n$. Consider the
transformation $(x,\pi)$ that maps a binary vector $y$ as follows:
 $$(x,\pi)(y)=x+\pi(y),$$
where $\pi(y)=(y_{\pi^{-1}(1)},\ldots,y_{\pi^{-1}(n)})$.
 The composition of two
transformations $(x,\pi)$, $(y,\pi')$ is defined as
$$(x,\pi)\cdot(y,\pi')=(x+\pi(y),\pi\circ\pi'),$$
where $\pi \circ \pi'$ is the composition of permutations $\pi$
and $\pi'$ defined as follows:
 $$ \pi\circ\pi'(i)=\pi(\pi'(i))$$
 for any $i\in \{1,2,\ldots,n\}.$  {\it The automorphism group} $\Aut(F^n)$ of $F^n$ is the
group of all such transformations $(x,\pi)$ with respect to the
composition. {\it The automorphism group} $\Aut(C)$ of a code $C$
is the setwise stabilizer of $C$ in $\Aut(F^n)$.

A group $G$ acting on a
set is called {\it regular} if the action is transitive and the
order of $G$ coincides with the size of the set. A subgroup of the
automorphism group of a code is called {\it regular} if it acts
regularly on  the set of its codewords. A code $C$ is called
{\it propelinear} \cite{PR} if $\Aut(C)$ has a regular
subgroup.

 It is well-known that the supports of the  codewords of weight 3
in any perfect code containing the all-zero vector form a Steiner
triple system. A perfect code $C$ is called {\it homogeneous} if
all Steiner triple systems of the codes $C+y$, $y\in C$ are
isomorphic. The homogeneous perfect codes were introduced  in
\cite{OPP}. Obviously, any propelinear code is necessarily
homogeneous. Despite of the existence of nonpropelinear
homoge\-neous Vasil'ev perfect codes for any length $n$, $n\geq
15$ \cite{MS2}, the existence of a rich construction of such codes
remains to be an open problem.

 Propelinear codes play an important role in the
theory of optimal codes since
  they are close to linear codes by several properties of their automorphism
  groups.
  Nowadays there are known several classes of propelinear codes, among
  them are Preparata and Kerdock codes \cite{HKCSS}, \cite{RP} $Z_4$-linear Reed-Muller codes \cite{PRS},
 $Z_4$-linear and $Z_{2^k}$-linear Hadamard codes \cite{K1}
\cite{K2}, etc.

Classical propelinear perfect codes are $Z_4$-linear \cite{K1} and
$Z_2Z_4$-linear codes \cite{BR}. It is known that propelinear
perfect codes can be obtained by the Plotkin and the Vasil'ev
constructions \cite{BMRS}. In
\cite{BorgesMogilnykhRifaSoloveva} all transitive codes from
\cite{Pot}
 found by a representation via the Phelps construction were proved to be propelinear.
The codes from \cite{Pot} were later generalized by Krotov and
Potapov in \cite{KrotovPotapov} who utilized quadratic functions
in the Vasil'ev construction. Note that the Vasil'ev construction
was generalized by the Mollard construction for propelinear
perfect codes \cite{BMRS}. The approaches of
\cite{Pot,BorgesMogilnykhRifaSoloveva,KrotovPotapov} gave
exponential classes of propelinear codes (the best lower bound was
obtained in \cite{KrotovPotapov}) but all these codes are of small
rank $n-\log(n+1)+1$, where $n$ is the length of the codes.
Moreover, the ranks of $Z_4$-linear extended perfect codes of
length $n$ do not exceed $n-\lceil \frac{\log(n)+1}{2}\rceil$, see
\cite{K1} (an analogous result \cite{PR} holds for ranks of
$Z_2Z_4$-linear perfect codes from \cite{BR}).

 The question of finding propelinear
perfect codes of large ranks was considered in \cite{GMS} and was
based again on the Mollard construction. In this paper a solution for the rank problem for propelinear codes is given with exception of few finite open cases.
Therefore the problem of
finding new methods of constructing propelinear non-Mollard codes
of large ranks is open.
 The kernel problem as far as the rank and kernel problem
is still open for propelinear perfect binary codes. Recall that
the rank and kernel problem for perfect binary codes was solved in
the paper \cite{AHS}.


In the paper we obtain a  new class of propelinear perfect and
extended perfect binary codes of
  ranks in $\{2^r-r-1,\ldots,2^r-2\}$ and  the dimensions of the kernels in $\{2^r-2r-2,\ldots,2^r-r-3\}$. The paper is organized as follows. The general construction is given in Section 2.
  The concatenation construction  \cite{Sol1981}, see also \cite{Phelps}, uses a partition of the even weight code into the extended perfect codes of length $2^r$ and
a permutation on the elements of the partition. It is not
difficult to show that the  full rank codes can not be obtained by
the construction \cite{Sol1981}. In the paper we consider the case
when the partition is into extended Hamming codes. The
construction in Section 2 for propelinear codes uses the
automorphisms of the regular subgroups of the general affine group
 of $F^r$ as permutations. In Section
 3 we investigate ranks and kernels of the version of the construction \cite{Sol1981} with arbitrary permutations.
We obtain the expressions for the ranks and the dimension of the kernels of these codes in terms of these permutations.
Moreover, we show that any such code with the dimension of the kernel $2^r-2r-2$ is inequivalent to any Mollard
 propelinear code. The discussion is continued in  Section 4 where we construct an
infinite series of new propelinear perfect codes.
For
this purpose we apply the direct product construction for regular
subgroups of the general affine group \cite{Mog} to a regular
subgroup of the general affine group constructed by Hegedus in
\cite{H}.

\section{A construction for  propelinear perfect codes}

Let the coordinates of the vector space $F^{2^r}$ be indexed by
the vectors from $F^r$. Below the all-zero vector  ${\bf 0}^r$ of length $r$ is denoted by
${\bf 0}$ and the length of the vector will be always clear from
the context. Define the following code:
 $${\mathcal H}=\{c\in F^{2^r}:\sum_{a: c_a=1}a={\bf 0}, \wt(c)\equiv 0(\mbox{mod }2)\}.$$

 Given a code $C$ and a coordinate position {\it the
punctured code} $C'$ is defined as the code whose codewords are
obtained by removing the coordinate in all codewords of
$C$. Consider the code ${\mathcal H}'$obtained by puncturing ${\mathcal H}$  in the
coordinate indexed by ${\bf 0}$. We index the coordinate positions of $F^{2^r-1}$
by the nonzero vectors of $F^r$ and therefore we have:
$${\mathcal H}'=\{c\in F^{2^r-1}:\sum_{a:
c_a=1}a={\bf 0}\}.$$

 For an arbitrary vector  $a$ in $F^r$ the code
${\mathcal H}+e_a+e_{{\bf 0}}$ is denoted by ${\mathcal H}_a$,
here $e_a$ is the vector in $F^n$ with the only one nonzero
position $a$, $a\in F^r$. The code ${\mathcal H}$ is an extended
Hamming code and the collection of the cosets ${\mathcal H}_a$,
where $a\in F^r$, is the partition of the set of all even weight
vectors of $F^{2^r}$ into cosets of the code ${\mathcal H}$.

Denote the {\it general linear group} that consists of nonsingular
$r\times r$ matrices  over  $F$  by  $\GL(r,2)$. Consider an
affine transformation $(a,M)$, $a\in F^r, M \in \GL(r,2)$. Its
action on $F^r$ is defined as \begin{equation}\label{actionGA}(a,
M)(b) = a + Mb,\end{equation} $b \in F^r$. The composition of any
two affine transformations $(a,M)$ and $(b,M')$ is the
transformation $(a+bM,MM')$. The {\it general affine group} of the
space $F^r$ with elements $\{(a,M):a \in F^r, M\in \GL(r,2)\}$
with respect to the composition is denoted by $\GA(r,2)$.

A subgroup $G$ of a group $\GA(r,2)$ is called {\it regular} if it
is regular with respect to the action (\ref{actionGA}) on the
vectors of $F^r$. The action of $\GA(r,2)$ on the vectors of $F^r$
is equivalent to the action of the automorphism group on the
codewords of the Hadamard code (the dual code to the Hamming code
of length $2^r-1$), see \cite{Mog}. Therefore the regular
subgroups of $\GA(r,2)$ are in a one-to-one correspondence with the
regular subgroups of the automorphism group of the Hadamard code.

By definition for any regular subgroup $G$ of the group $\GA(r,2)$
and any $a, a\in F^r$ there is a unique affine transformation that
maps ${\bf 0}$ to $a$.
In throughout what follows we denote it
by $g_a$. Obviously, $g_a$ is $(a,M_a)$ for some matrix  $M_a$ in
$\GL(r,2)$. Since \begin{equation}\label{gmult}
g_ag_b=(g_a(b),M_aM_b)\end{equation} we have

\begin{equation}\label{Mid0}
M_{g_a(b)}=M_aM_b.
\end{equation}

Let  $T$ be an  automorphism of a regular subgroup  $G$ of the
group $\GA(r,2)$. By $\tau$ we denote the permutation on the
vectors of $F^{r}$ {\it induced} by the
 automorphism  $T$, i.e.
$$T(g_a)=g_{\tau(a)}.$$
Obviously we always have $\tau({\bf 0})={\bf 0}$. Since  $T$ is an
automorphism of $G$ then by the definition of $\tau$ and (\ref{gmult}) we have
$(\tau(g_a(b)),M_{\tau(g_a(b))})=T(g_ag_b)=T(g_a)T(g_b)=g_{\tau(a)}g_{\tau(b)}=(g_{\tau(a)}(\tau(b)),M_{\tau(a)}M_{\tau(b)}).$

\noindent
Therefore the following equalities hold:
\begin{equation}\label{gid}
\tau(g_a(b))=g_{\tau(a)}(\tau(b)),
\end{equation}
\begin{equation}\label{Mid}
M_{\tau(g_a(b))}=M_{\tau(a)}M_{\tau(b)}.
\end{equation}




The concatenation of two vectors $x\in F^{r'}$ and $y\in F^{r''}$
is denoted by $x|y$. For codes $C$ and $D$ by $C\times D$ denote
the code $\{x|y:x\in C, y \in D\}$. Let $\pi'$, $\pi''$ be
permutations on the vectors of $F^{r'}$ and $F^{r''}$ respectively
then by $\pi'|\pi''$ we denote the permutation acting on the
concatenations $x|y$ of the vectors $x\in F^{r'}$ and $y\in
F^{r'}$ from $F^{r'+r''}$ as follows:
$(\pi'|\pi'')\,(x|y)=\pi'(x)|\pi''(y)$.

In particular, let  $\pi'$ and $\pi''$ be permutations on the
coordinate positions of $F^{r'}$. A permutation on the coordinates
of the vector space naturally induces the permutation on the set
of vectors. So throughout  Section 2, we use the same notation
$\pi'|\pi''$ for the permutation of the coordinate positions of
$F^{2r'}$ that acts as follows:
$(\pi'|\pi'')\,(x|y)=\pi'(x)|\pi''(y)$, for any $x,y\in F^{r'}$.

Consider the following  particular case of the concatenation
construction  \cite{Sol1981} for extended perfect codes:
\begin{equation}\label{Soloveva_codes} S_{{\mathcal
H},\tau}=\bigcup_{a\in F^r} {\mathcal H}_a\times {\mathcal
H}_{\tau(a)},\end{equation} where $\tau$ is a bijection from $F^{2^r}$
to $F^{2^r}$.
\begin{theorem}\label{Theorem 1}Let $G$  be a regular subgroup of $\GA(r,2)$ and $\tau$ be the permutation induced by an automorphism of $G$.
Then the code $S_{{\mathcal H},\tau}$ is a propelinear extended
perfect binary code of length $2^{r+1}$.
\end{theorem}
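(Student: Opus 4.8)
The plan is to prove the two assertions of the statement separately: that $S_{\mathcal{H},\tau}$ is an extended perfect code, and that it is propelinear. The first part follows the standard distance-plus-counting criterion for the concatenation (\ref{Soloveva_codes}). Since every $\mathcal{H}_a$ consists of even-weight vectors, each codeword of $S_{\mathcal{H},\tau}$ has even weight. For the minimum distance I would take $x_1|y_1\in\mathcal{H}_a\times\mathcal{H}_{\tau(a)}$ and $x_2|y_2\in\mathcal{H}_b\times\mathcal{H}_{\tau(b)}$ and split into cases: if $a=b$, the two halves lie in single cosets of the distance-$4$ code $\mathcal{H}$, so a nonzero difference in either half already costs $4$; if $a\neq b$, then $x_1,x_2$ lie in distinct cosets of $\mathcal{H}$ and hence differ in at least $2$ positions, while $\tau$ being a bijection forces $\tau(a)\neq\tau(b)$ and the same lower bound for $y_1,y_2$, giving total distance at least $4$. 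Counting gives $|S_{\mathcal{H},\tau}|=2^r|\mathcal{H}|^2=2^{2^{r+1}-r-2}=2^{2^{r+1}-1}/2^{r+1}$, so puncturing any coordinate yields a length-$(2^{r+1}-1)$ code of distance $\geq 3$ meeting the Hamming bound; thus the punctured code is perfect and $S_{\mathcal{H},\tau}$ is extended perfect.

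For propelinearity I would exhibit an explicit regular subgroup of $\Aut(S_{\mathcal{H},\tau})$ built from two families. First, the pure translations $(h|h',\mathrm{id})$ with $h,h'\in\mathcal{H}$; these fix $S_{\mathcal{H},\tau}$ because $h+\mathcal{H}_a=\mathcal{H}_a$, and they form a subgroup $N\cong\mathcal{H}\times\mathcal{H}$. Second, for each $a\in F^r$ the transformation $\Phi_a=(v_a,\pi_a)$, where $\pi_a=\phi_{g_a}|\phi_{g_{\tau(a)}}$ is the coordinate permutation induced on the two halves of $F^{2^{r+1}}$ by the affine maps $g_a$ and $g_{\tau(a)}$, and $v_a=(e_a+e_{\mathbf{0}})|(e_{\tau(a)}+e_{\mathbf{0}})$. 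The key computation is that $\Phi_a$ preserves $S_{\mathcal{H},\tau}$. Since $\mathcal{H}$ is invariant under the affine group and is even-weight, an affine coordinate permutation fixes $\mathcal{H}$ and sends $\mathcal{H}_b$ to $\mathcal{H}_{M_ab}$; adding $v_a$ then turns this into $\mathcal{H}_{M_ab+a}=\mathcal{H}_{g_a(b)}$ on the first half and, using (\ref{gid}), into $\mathcal{H}_{g_{\tau(a)}(\tau(b))}=\mathcal{H}_{\tau(g_a(b))}$ on the second. As $b$ runs over $F^r$ so does $g_a(b)$, whence $\Phi_a(S_{\mathcal{H},\tau})=S_{\mathcal{H},\tau}$.

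Finally I would assemble these into a regular subgroup. Using the composition rule together with $g_ag_b=g_{g_a(b)}$ from (\ref{gmult}) and the identity (\ref{gid}), a direct calculation gives $\Phi_a\Phi_{a'}=\Phi_{g_a(a')}$, so $\{\Phi_a:a\in F^r\}$ is a subgroup isomorphic to $G$. Conjugation shows each $\Phi_a$ normalizes $N$, since it carries the translation by $h|h'$ to the translation by $\phi_{g_a}(h)|\phi_{g_{\tau(a)}}(h')\in\mathcal{H}\times\mathcal{H}$, and $\{\Phi_a\}\cap N=\{\mathrm{id}\}$. Hence $\mathcal{G}=N\rtimes\{\Phi_a:a\in F^r\}$ has order $|\mathcal{H}|^2\cdot 2^r=|S_{\mathcal{H},\tau}|$. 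Applying $(h|h',\mathrm{id})\Phi_a$ to $\mathbf{0}$ for all admissible $a,h,h'$ sweeps out $\bigcup_a\mathcal{H}_a\times\mathcal{H}_{\tau(a)}$, so the orbit of $\mathbf{0}$ is the whole code; a subgroup of $\Aut(S_{\mathcal{H},\tau})$ of order $|S_{\mathcal{H},\tau}|$ acting transitively is regular, giving propelinearity.

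The \emph{main obstacle} is the verification that $\Phi_a$ is an automorphism at all, i.e. the step in the previous paragraph forcing the permuted-and-translated cosets to recombine into the original partition. It is precisely here—and, as the homomorphism computation shows, also in the relation $\Phi_a\Phi_{a'}=\Phi_{g_a(a')}$—that the hypothesis that $\tau$ is induced by an \emph{automorphism} of $G$ enters, through identity (\ref{gid}); for an arbitrary permutation $\tau$ fixing $\mathbf{0}$ the code would still be extended perfect but these maps would not preserve it.
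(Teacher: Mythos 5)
Your proof is correct and, on the propelinearity part, takes essentially the paper's route: the paper writes down the single set $\Gamma=\bigcup_{a\in F^r}\{(x|y,\pi_a|\pi_{\tau(a)}):x\in\mathcal H_a,\ y\in\mathcal H_{\tau(a)}\}$ with the \emph{linear} coordinate permutations $\pi_a(b)=M_ab$ and verifies closure under composition directly, whereas you package the analogous group as $N\rtimes\{\Phi_a:a\in F^r\}$ with the \emph{affine} permutations $b\mapsto g_a(b)$ (which makes $\Phi_a\Phi_{a'}=\Phi_{g_a(a')}$ hold exactly rather than only modulo $\mathcal H\times\mathcal H$) --- a cosmetic repackaging, and both arguments hinge on identity (\ref{gid}) precisely where you say the automorphism hypothesis enters. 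The only genuine addition is your first paragraph: the paper takes the extended-perfectness of $S_{\mathcal H,\tau}$ as given by the concatenation construction \cite{Sol1981}, while you verify it directly via the distance case analysis and the sphere-packing count.
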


\begin{proof} 
 For an element $g_a=(a,M_a)$ of a
regular subgroup $G$ of $\GA(r,2)$ by $\pi_a$ we denote the
permutation corresponding to the matrix $M_a$, i.e.
\begin{equation}\label{pi_and_M} \pi_a(b)=M_ab.
\end{equation}

Since $M_a$ is in $\GL(r,2)$, it preserves linear independency, so by definition of ${\mathcal H}$, we have
\begin{equation}
\label{eq00}
\pi_a({\mathcal H})={\mathcal H}, \pi_a(e_{\bf 0})=e_{\bf 0}.
\end{equation}

Consider the following set of automorphisms of   $F^{r+1}$:
$$\Gamma=\bigcup_{a\in F^r}\{(x|y,\pi_{a}|\pi_{\tau(a)}): x\in {\mathcal H}_a,y\in
{\mathcal H}_{\tau(a)}\}.$$

Note that when $(x|y,\pi_{a}|\pi_{\tau(a)})$ runs through
$\Gamma$, the vector $x|y$ runs through the code $S_{{\mathcal
H},\tau}$. If we prove that $\Gamma$ is a group, then the
orbit of the all-zero vector from $F^{2^{r+1}}$ under $\Gamma$ is
$S_{{\mathcal H},\tau}$, so $\Gamma$ is a regular subgroup of $
\Aut(S_{{\mathcal H},\tau})$.

We now show that $\Gamma$ is closed under composition. Consider
two automorphisms $(x|y,\pi_{a}|\pi_{\tau(a)})$ and
$(u|v,\pi_{b}|\pi_{\tau(b)})$ from $\Gamma$, by definition of
$\Gamma$ we have:
\begin{equation}\label{eq01}
x \in {\mathcal H}_a, y \in {\mathcal H}_{\tau(a)},
\end{equation}
\begin{equation}\label{eq02}
u \in {\mathcal H}_b, v \in {\mathcal H}_{\tau(b)}.
\end{equation}
 We denote the composition of these two automorphisms by
$(w|z,\pi_a\pi_b|\pi_{\tau(a)}\pi_{\tau(b)})$, where
$$w=x+\pi_{a}(u),$$ $$z=y+ \pi_{\tau(a)}(v)$$
and show that it is in $\Gamma$.

Since the code ${\mathcal H}$ is linear and  $\pi_a({\mathcal
H})={\mathcal H}$ (see (\ref{eq00})), $u\in {\mathcal H}_b$,
${\mathcal H}_b=e_b+e_{\bf 0}+{\mathcal H}$ (see (\ref{eq02})) we
have
 $$w+{\mathcal H}=x+\pi_a(u)+{\mathcal H}=x+\pi_a(e_b+e_{\bf 0})+{\mathcal H}.$$
 Because  $x$ is in ${\mathcal H}_a={\mathcal H}+e_a+e_{{\bf 0}}$ (see (\ref{eq01})), $\pi_a(e_{{\bf 0}})=e_{{\bf
 0}}$ (see (\ref{eq00})) we obtain   $x+\pi_a(e_b+e_{\bf 0})+{\mathcal H}=e_a+\pi_a(e_b)+{\mathcal
H}$.
  By (\ref{pi_and_M}), i.e.
 $\pi_a(e_b)=e_{M_ab}$, we have  $e_a+\pi_a(e_b)+{\mathcal
H}=e_a+e_{M_ab}+{\mathcal H}$. The linear code ${\mathcal H}$
contains the vector $e_a+e_{M_ab}+e_{a+M_ab}+e_{{\bf 0}}$, hence
\begin{equation}\label{eq1}w+{\mathcal H}={\mathcal H}_{a+M_ab}={\mathcal H}_{g_a(b)}.
\end{equation}
\noindent
 Let us now show that
\begin{equation}\label{eq2}z+{\mathcal H}={\mathcal H}_{\tau({g_a(b)})}.\end{equation}
\noindent
 From  $z=y+ \pi_{\tau(a)}(v)$, $\pi_{\tau(a)}({\mathcal H})={\mathcal H}$,
$\pi_{\tau(a)}(e_{\bf 0})=e_{\bf 0}$ (see (\ref{eq00})), $v\in {\mathcal
H}_{\tau(b)},y \in {\mathcal H}_{\tau(a)}$ (see (\ref{eq01}) and (\ref{eq02})) and
by (\ref{pi_and_M}) we have
$$z+{\mathcal H}=y+\pi_{\tau(a)}(v)+{\mathcal H}=e_{\tau(a)}+\pi_{\tau(a)}(e_{\tau(b)})+{\mathcal H}=e_{\tau(a)}+e_{M_{\tau(a)}\tau(b
)}+{\mathcal H}.$$ \noindent By definition of ${\mathcal H}$ we
have $e_{\tau(a)}+e_{M_{\tau(a)}\tau(b
)}+e_{\tau(a)+M_{\tau(a)}\tau(b)}+e_{\bf 0}\in {\mathcal H}$,
which combined with $g_{\tau(a)}(\tau(b
))=\tau(a)+M_{\tau(a)}\tau(b )$ and the fact that ${\mathcal H}$
is linear we obtain $$e_{\tau(a)}+e_{M_{\tau(a)}\tau(b
)}+{\mathcal H}={\mathcal H}_{\tau(a)+M_{\tau(a)}\tau(b )}
={\mathcal H}_{g_{\tau(a)}(\tau(b))}.$$

Using (\ref{gid}) we have $z+{\mathcal H}={\mathcal
H}_{\tau(g_a(b))}$, i.e. (\ref{eq2}) holds.

Note that according to the correspondence (\ref{pi_and_M}) the
equalities $M_{g_a(b)}=M_aM_b$ and
$M_{\tau(g_a(b))}=M_{\tau(a)}M_{\tau(b)}$, see (\ref{Mid0}) and
(\ref{Mid}), can be rewritten as $\pi_{g_a(b)}=\pi_a\pi_b$ and
$\pi_{\tau(g_a(b))}=\pi_{\tau(a)}\pi_{\tau(b)}$. These equalities
imply that the permutation $\pi_a\pi_b|\pi_{\tau(a)}\pi_{\tau(b)}$
is equal to $\pi_{g_a(b)}| \pi_{\tau(g_a(b))}$. Therefore the
considered composition of automorphisms
$(x|y,\pi_{a}|\pi_{\tau(a)})$ and $(u|v,\pi_{b}|\pi_{\tau(b)})$,
i.e. $(w|z,\pi_{g_a(b)}| \pi_{\tau(g_a(b))})$ belongs to $\Gamma$
since by the equalities (\ref{eq1}) and (\ref{eq2}) the vector
$w|z$ belongs to ${\mathcal H}_{g_a(b)}\times {\mathcal
H}_{\tau(g_a(b))}$. Hence $\Gamma$ is a regular subgroup
 of the automorphism group of the code $S_{{\mathcal H},\tau}$ and
  the code $S_{{\mathcal H},\tau}$ is propelinear.

\end{proof}

\begin{proposition}\label{Prop1}
Let $C$ be a propelinear code with minimum distance at least $2$
whose automorphism group contains a regular subgroup $G$. Let $i$
be a coordinate such that $\pi(i)=i$ for any $(x,\pi) \in G$. Then
the code $C'$ obtained from $C$ by puncturing in the $i$th
coordinate position is propelinear.

\end{proposition}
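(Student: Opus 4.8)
The plan is to show that puncturing the coordinate $i$ transports the regular subgroup $G$ of $\Aut(C)$ to a regular subgroup of $\Aut(C')$. The key observation is that the hypothesis $\pi(i)=i$ for every $(x,\pi)\in G$ guarantees that each automorphism in $G$ acts on the remaining coordinates in a well-defined way, so deleting the $i$th coordinate yields a natural ``restriction'' map on automorphisms.

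First I would set up the puncturing map explicitly. Let $p\colon F^n\to F^{n-1}$ be the projection that deletes the $i$th coordinate, so that $C'=p(C)$. For an automorphism $(x,\pi)\in G$, since $\pi$ fixes $i$, the permutation $\pi$ restricts to a permutation $\pi'$ of the remaining $n-1$ coordinate positions, and I would define $\varphi((x,\pi))=(p(x),\pi')$. The first step is to verify that this $(p(x),\pi')$ is genuinely an automorphism of $C'$: because $(x,\pi)$ preserves $C$ and $\pi$ fixes coordinate $i$, one checks that applying $(x,\pi)$ and then projecting equals projecting and then applying $(p(x),\pi')$, i.e. $p\circ(x,\pi)=(p(x),\pi')\circ p$ as maps on $F^n$; consequently $(p(x),\pi')$ maps $C'=p(C)$ onto itself.

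Next I would check that $\varphi$ is a group homomorphism. Using the composition rule $(x,\pi)\cdot(y,\rho)=(x+\pi(y),\pi\circ\rho)$, and the facts that $p$ is linear, that $\pi$ fixes $i$ so $p(\pi(y))=\pi'(p(y))$, and that the restriction of a composition of $i$-fixing permutations is the composition of the restrictions, one gets $\varphi((x,\pi)\cdot(y,\rho))=\varphi((x,\pi))\cdot\varphi((y,\rho))$. So $G'=\varphi(G)$ is a subgroup of $\Aut(C')$. The main point remaining is to prove that $G'$ acts regularly on $C'$, and for this I would establish that $\varphi$ is injective on $G$: this is where the minimum distance hypothesis $d(C)\geq 2$ enters. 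Indeed, two distinct codewords of $C$ differing only in coordinate $i$ would be at Hamming distance $1$, contradicting $d(C)\geq 2$; hence $p$ is injective on $C$, and from regularity of $G$ on $C$ together with the intertwining relation $p\circ(x,\pi)=(p(x),\pi')\circ p$ one deduces that $\varphi$ is injective and that $G'$ acts transitively on $C'=p(C)$.

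The hard part will be assembling injectivity of $\varphi$ cleanly: I expect the cleanest route is to first argue $|C'|=|C|$ from $d(C)\geq 2$ (so $p$ is a bijection $C\to C'$), then observe that $|G'|\leq|G|=|C|=|C'|$ while transitivity of $G'$ on the $|C'|$ codewords forces $|G'|\geq|C'|$; equality then yields both injectivity of $\varphi$ and regularity of $G'$ in one stroke. Concluding, $\Aut(C')$ contains the regular subgroup $G'$, so $C'$ is propelinear. I would not anticipate any further obstacle beyond bookkeeping the action of restricted permutations, which is routine once the intertwining identity $p\circ(x,\pi)=(p(x),\pi')\circ p$ is in hand.
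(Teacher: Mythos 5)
Your proposal is correct and follows essentially the same route as the paper: restrict each $(x,\pi)\in G$ to the punctured coordinates to obtain $G'=\{(x',\pi')\}$ and show it is a regular subgroup of $\Aut(C')$. The paper dismisses the isomorphism $G'\cong G$ and regularity as obvious, whereas you actually supply the justification (the intertwining identity plus the counting argument showing where $d(C)\geq 2$ is needed), which is a faithful filling-in of the same argument rather than a different one.
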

\begin{proof}

For $x\in C$ let $x'$ denote the codeword of $C'$ obtained by
deleting its $i$th coordinate position. Suppose the coordinates of
$C'$ are indexed by the coordinates of the code $C$ without $i$th
position, so $\mbox{supp}(x')=\mbox{supp}(x)\setminus \{i\}$ if $i \in \mbox{supp}(x)$
and $\mbox{supp}(x')=\mbox{supp}(x)$ otherwise. For a permutation $\pi$, where
$(x,\pi)\in G$, by $\pi'$ denote the permutation acting on the
coordinate positions of $C'$, where $\pi'(j)=\pi(j)$, for any
coordinate $j$ of the code $C$ different from $i$. Obviously, the
group $G'=\{(x',\pi'):(x,\pi)\in G\}$ is isomorphic to $G$ and
$G'$ is a regular subgroup of $\Aut(C')$.

\end{proof}

 Consider the following puncturing $S_{{\mathcal
H},\tau}'$ of $S_{{\mathcal H},\tau}$:
\begin{equation}\label{code_S} S_{{\mathcal
H},\tau}'=\bigcup_{a\in F^r} {\mathcal H}'_a\times {\mathcal
H}_{\tau(a)},\end{equation} where ${\mathcal H}'_a=e_a+{\mathcal
H}',$ ${\mathcal H}'_{\bf 0}={\mathcal H}'.$ The code
$S_{{\mathcal H},\tau}'$ is
  perfect. Let $\tau$ be a permutation induced by an automorphism of a regular subgroup $G$ of $\GA(r,2)$.
In this case for every $a\in F^r$ the permutation
$\pi_a$ defined in (\ref{pi_and_M}) fixes the coordinate ${\bf 0}$ of ${\mathcal H}$. Then
every permutation $\pi_a|\pi_{\tau(a)}$ of any automorphism
$(x|y,\pi_a|\pi_{\tau(a)})$ of the regular subgroup $\Gamma$ of
$\Aut(S_{{\mathcal H},\tau})$ fixes the coordinate position of
$S_{{\mathcal H},\tau}$ in which we puncture the code
$S_{{\mathcal H},\tau}$ to obtain $S_{{\mathcal H},\tau}'$. By
Proposition \ref{Prop1} we see that $S_{{\mathcal H},\tau}'$ is
propelinear. Therefore,
a class of propelinear perfect codes is obtained: 

\begin{corollary}\label{Rem1}Let $G$  be a regular subgroup   of $\GA(r,2)$ and $\tau$ be the permutation induced by an automorphism of the group $G$.
Then the code $S_{{\mathcal H},\tau}'$ is a propelinear perfect
binary code of length $2^{r+1}-1$.
\end{corollary}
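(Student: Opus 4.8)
The plan is to derive the corollary from Theorem~\ref{Theorem 1} together with Proposition~\ref{Prop1}, rather than exhibiting a regular subgroup from scratch. The two things to establish are that $S_{{\mathcal H},\tau}'$ is perfect and that it is propelinear, and both should reduce to facts already in hand.

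For perfectness I would first invoke Theorem~\ref{Theorem 1}: the code $S_{{\mathcal H},\tau}$ is an extended perfect binary code of length $2^{r+1}$, hence has minimum distance $4$ and attains the sphere-packing bound for the extended parameters. Puncturing such a code in any single coordinate is injective, since with $d=4$ no two codewords can agree off a single position; thus $S_{{\mathcal H},\tau}'$ has the same cardinality and length $2^{r+1}-1$. A direct count then shows it attains the sphere-packing bound for length $2^{r+1}-1$, and since its minimum distance is at least $d-1=3$, it is forced to be exactly $3$, so the code is perfect. I would also record that the code defined by (\ref{code_S}) is genuinely the puncturing of $S_{{\mathcal H},\tau}$ in the coordinate of the first block indexed by ${\bf 0}$: puncturing ${\mathcal H}_a={\mathcal H}+e_a+e_{\bf 0}$ in position ${\bf 0}$ removes the $e_{\bf 0}$ term and yields $e_a+{\mathcal H}'={\mathcal H}'_a$, matching the definition.

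For propelinearity I would apply Proposition~\ref{Prop1} to $C=S_{{\mathcal H},\tau}$ with $G=\Gamma$, the regular subgroup produced in the proof of Theorem~\ref{Theorem 1}, and with $i$ the coordinate of the first block indexed by ${\bf 0}$. The hypothesis of Proposition~\ref{Prop1} that the minimum distance be at least $2$ holds since $S_{{\mathcal H},\tau}$ has distance $4$. The crucial point is that this coordinate is fixed by every element of $\Gamma$: each permutation occurring in $\Gamma$ has the form $\pi_a|\pi_{\tau(a)}$, and by (\ref{eq00}) we have $\pi_a(e_{\bf 0})=e_{\bf 0}$, because $\pi_a$ is the permutation attached to the linear map $M_a$ via (\ref{pi_and_M}) and a linear map fixes ${\bf 0}$. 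Hence $\pi_a|\pi_{\tau(a)}$ fixes position ${\bf 0}$ of the first block for all $a\in F^r$, so the permutation of every element of $\Gamma$ fixes $i$.

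With these pieces in place, Proposition~\ref{Prop1} yields a regular subgroup of $\Aut(S_{{\mathcal H},\tau}')$, so the code is propelinear; combined with the perfectness and the length computation this gives the corollary. I expect the only genuinely delicate step to be the bookkeeping that identifies the punctured code (\ref{code_S}) with the puncturing of $S_{{\mathcal H},\tau}$ at exactly the coordinate fixed by $\Gamma$; once that identification is made, both the perfectness and the application of Proposition~\ref{Prop1} are routine.
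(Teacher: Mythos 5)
Your proposal is correct and follows essentially the same route as the paper: identify $S_{{\mathcal H},\tau}'$ as the puncturing of $S_{{\mathcal H},\tau}$ at the coordinate ${\bf 0}$ of the first block, observe via (\ref{eq00}) that every permutation $\pi_a|\pi_{\tau(a)}$ of the regular subgroup $\Gamma$ fixes that coordinate, and apply Proposition~\ref{Prop1}. The only difference is that you spell out the sphere-packing argument for perfectness of the punctured code, which the paper takes as standard.
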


Moreover, the values for invariants (i.e. rank and kernel) which
we obtain below in the paper for  the extended perfect code
$S_{{\mathcal H},\tau}$ are the same for the perfect code
$S_{{\mathcal H},\tau}'$.
\section{Rank and kernel of $S_{{\mathcal H},\tau}$}

 In the current section we discuss the ranks and the
dimensions of the kernels for the codes $S_{{\mathcal H},\tau}$.
We find the formulas for these invariants in terms of the
intersection of $\tau({\mathcal H})$ and ${\mathcal H}$. Note that
$\tau$ is an arbitrary bijection preserving ${\bf 0}$ in this
section with exception of Example 1.

 We denote the
dimension of a linear code $C$ by $\dim(C)$. The linear span of a
code $C$ over $F$ is denoted by $<C>$. The {\it rank of a code}
$C$, denoted by $\rank(C)$, is $\dim(<C>)$. The {\it kernel} $\Ker(C)$ of a code $C$ of
length $n$ is defined as the set of all vectors $x\in F^n$ such
that $x+C=C$.
Note that the all-zero vector is in $C$ if and only if
$\Ker(C)\subseteq C$.

Let $\tau$ be  a bijection from $F^r$ to $F^r$ (a permutation of
the coordinate positions of ${\mathcal H}$) that fixes the vector
${\bf 0}$. Define {\it the distension} of $\tau$ to be $
2^r-r-1-\dim({\mathcal H} \cap \tau({\mathcal H}))$, where
$\tau({\mathcal H})=\{\tau(x):x \in {\mathcal H}\}$. Note that
$2^r-r-1$ is the dimension of the code ${\mathcal H}$.


\begin{lemma}\label{Lemma 1}
Let $\tau$ be  a bijection from $F^r$ to $F^r$, $\tau({\bf
0})={\bf 0}$ and  $l$ be the distension of $\tau$. Then the rank
of $S_{{\mathcal H},\tau}$ is $(2^{r+1}-r-2)+l$.
\end{lemma}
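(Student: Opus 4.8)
The plan is to compute the rank by working modulo the linear subcode $\mathcal H\times\mathcal H$. First I would observe that the summand with $a={\bf 0}$ is exactly $\mathcal H\times\mathcal H$ (since $\mathcal H_{\bf 0}=\mathcal H$ and $\tau({\bf 0})={\bf 0}$), so this linear space lies in $\langle S_{\mathcal H,\tau}\rangle$. Writing a general codeword of $\mathcal H_a\times\mathcal H_{\tau(a)}$ as $(h_1+e_a+e_{\bf 0})\,|\,(h_2+e_{\tau(a)}+e_{\bf 0})$ with $h_1,h_2\in\mathcal H$, every codeword is congruent modulo $\mathcal H\times\mathcal H$ to the representative $v_a:=(e_a+e_{\bf 0})\,|\,(e_{\tau(a)}+e_{\bf 0})$. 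Hence $\langle S_{\mathcal H,\tau}\rangle=\langle\,\mathcal H\times\mathcal H,\ v_a\ (a\in F^r)\,\rangle$ and
$$\rank(S_{\mathcal H,\tau})=\dim(\mathcal H\times\mathcal H)+\dim\langle\bar v_a:a\in F^r\rangle=(2^{r+1}-2r-2)+\dim\langle\bar v_a\rangle,$$
where the bar denotes the image in the quotient $(F^{2^r}/\mathcal H)\times(F^{2^r}/\mathcal H)$.

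Next I would identify this quotient-span explicitly. Let $\sigma:F^{2^r}\to F^r$ be the (linear, surjective) syndrome map $\sigma(c)=\sum_{a:c_a=1}a$; its kernel is $\mathcal H\oplus\langle e_{\bf 0}\rangle$, and it induces an isomorphism $\mathcal E/\mathcal H\cong F^r$ on the even-weight code $\mathcal E$. Since $e_a+e_{\bf 0}$ and $e_{\tau(a)}+e_{\bf 0}$ are even-weight vectors of syndromes $a$ and $\tau(a)$, the vector $\bar v_a$ corresponds to the pair $(a,\tau(a))\in F^r\times F^r$. Therefore $\langle\bar v_a\rangle$ is the image $W$ of the linear map $L:F^{2^r}\to F^r\times F^r$, $L(c)=(\sigma(c),\sigma(\tau(c)))$, and $\dim\langle\bar v_a\rangle=\dim W=2^r-\dim\ker L$ by rank--nullity.

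The crux is computing $\ker L=\ker\sigma\cap\tau^{-1}(\ker\sigma)$. Here I would use a parity argument: writing $\ker\sigma=\mathcal H\oplus\langle e_{\bf 0}\rangle$ and $\tau^{-1}(\ker\sigma)=\tau^{-1}(\mathcal H)\oplus\langle e_{\bf 0}\rangle$ (using $\tau(e_{\bf 0})=e_{\bf 0}$), any common element $h+\varepsilon e_{\bf 0}=h'+\varepsilon'e_{\bf 0}$ with $h\in\mathcal H,\ h'\in\tau^{-1}(\mathcal H)$ forces $h+h'=(\varepsilon+\varepsilon')e_{\bf 0}$; since $h+h'$ has even weight while $e_{\bf 0}$ has odd weight, $\varepsilon=\varepsilon'$ and $h=h'\in\mathcal H\cap\tau^{-1}(\mathcal H)$. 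Thus $\ker L=(\mathcal H\cap\tau^{-1}(\mathcal H))\oplus\langle e_{\bf 0}\rangle$, and since $\tau$ is a coordinate permutation we have $\dim(\mathcal H\cap\tau^{-1}(\mathcal H))=\dim(\mathcal H\cap\tau(\mathcal H))$. Hence $\dim\ker L=\dim(\mathcal H\cap\tau(\mathcal H))+1$, giving $\dim W=2^r-1-\dim(\mathcal H\cap\tau(\mathcal H))$. Substituting into the rank formula and using $l=2^r-r-1-\dim(\mathcal H\cap\tau(\mathcal H))$ yields $\rank(S_{\mathcal H,\tau})=(2^{r+1}-2r-2)+(2^r-1-\dim(\mathcal H\cap\tau(\mathcal H)))=(2^{r+1}-r-2)+l$, as claimed. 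I expect the main obstacle to be this last kernel computation, in particular correctly handling the parity vector $e_{\bf 0}$ and the switch between $\tau$ and $\tau^{-1}$.
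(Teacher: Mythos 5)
Your proof is correct and follows essentially the same route as the paper: both rest on the decomposition $\langle S_{{\mathcal H},\tau}\rangle={\mathcal H}\times{\mathcal H}+\langle x|\tau(x):x\in F_{0}^{2^r}\rangle$ (your $v_a$ are exactly $x|\tau(x)$ for $x=e_a+e_{\bf 0}$, which span the diagonal part) and on showing that this diagonal part contributes $2^r-1-\dim({\mathcal H}\cap\tau({\mathcal H}))$ additional dimensions beyond ${\mathcal H}\times{\mathcal H}$. The only difference is bookkeeping: the paper exhibits an explicit basis adapted to the chain ${\mathcal H}\cap\tau^{-1}({\mathcal H})\subseteq{\mathcal H}\subseteq F_{0}^{2^r}$, whereas you obtain the same count via the syndrome map and rank--nullity, and your parity argument identifying $\ker L=({\mathcal H}\cap\tau^{-1}({\mathcal H}))\oplus\langle e_{\bf 0}\rangle$ is sound.
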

\begin{proof}
Let $F_0^{2^r}$ denote the  even weight code of length $2^r$.

Since

$$<\bigcup_{a\in F^r} {\mathcal H}_a\times
{\mathcal H}_{\tau(a)}>= <\bigcup_{a\in F^r} ({\mathcal
H}+e_a+e_{\bf 0})\times ({\mathcal H}+e_{\tau(a)}+e_{\bf 0})>=$$
$$<{\mathcal H}\times {\mathcal H}\cup \{x|\tau(x):x \in
F_{0}^{2^r}\}>,$$
 we have \begin{equation}\label{lemma 1eq}<S_{{\mathcal H},\tau}>=<{\mathcal H}\times
{\mathcal H}\cup \{x|\tau(x):x \in F_{0}^{2^r}\}>.\end{equation}

 Let the vectors $\{u_i\}_{i\in
\{1,\ldots,2^r-1\}}$ be a basis
of $F_0^{2^r}$ such that $u_1,\ldots,u_{\dim({\mathcal H}\cap
\tau({\mathcal H}))}$ is a basis
of the subspace ${\mathcal H}\cap \tau^{-1}({\mathcal H})$ and
$u_1,\ldots,u_{2^r-r-1}$ is a basis
of ${\mathcal H}$.

Consider the following two sets:
$${\mathcal B}=\{u_i|u_j,1\leq i,j \leq 2^r-r-1\},$$
$${\mathcal B}'=\{u_i|\tau(u_i):\dim({\mathcal H}\cap
\tau({\mathcal H}))+1\leq i \leq 2^r-1\}.$$ We show that
${\mathcal B}\cup {\mathcal B}'$ is a basis
of $<S_{{\mathcal H},\tau}>$.

 The vectors of ${\mathcal B}\cup {\mathcal B}'$ are
linearly independent. Indeed, obviously, ${\mathcal B}$ is a basis
of ${\mathcal H}\times {\mathcal H}$. Suppose that a nonzero
vector $x|\tau(x)$ is spanned by ${\mathcal B}'$. Then by the
definition of ${\mathcal B}'$ the vector $x$ is not from
${\mathcal H}\cap \tau^{-1}({\mathcal H})$ and therefore $x$ and
$\tau(x)$ can not be simultaneously in ${\mathcal H}$, i.e.
$x|\tau(x)\notin {\mathcal H}\times {\mathcal H}=<{\mathcal B}>$.

We show that $<{\mathcal B} \cup {\mathcal B}'>=<S_{{\mathcal
H},\tau}>$. The equality (\ref{lemma 1eq}) and $<{\mathcal
B}>={\mathcal H}\times {\mathcal H}$ imply that it is sufficient
to prove that the vectors $u_i|\tau(u_i), i\in \{1,\ldots,2^r-1\}$
are in $<{\mathcal B}\cup{\mathcal B}'>$. By definition of
$\{u_i\}_{i\in \{1,\ldots,\dim({\mathcal H}\cap \tau({\mathcal
H}))\}}$ these vectors are in ${\mathcal H}\cap
\tau^{-1}({\mathcal H})$, so $\tau(u_i)$ is in ${\mathcal H}$ for
$i\in \{1,\ldots,\dim({\mathcal H}\cap \tau({\mathcal
H}))\}$. We see that in this case the vector $u_i|\tau(u_i)$ is in
$<{\mathcal B}>={\mathcal H}\times {\mathcal H}$ for any $i\in
\{1,\ldots,\dim({\mathcal H}\cap \tau({\mathcal H}))\}$. The
remaining vectors $u_i|\tau(u_i),$  $i\in \{\dim({\mathcal
H}\cap \tau({\mathcal H}))+1,\ldots,2^r-1\}$ are from ${\mathcal
B}'$, so ${\mathcal B} \cup {\mathcal B}'$  spans $<S_{{\mathcal
H},\tau}>$.

The rank of $S_{{\mathcal H},\tau}$ is $|{\mathcal B}|+|{\mathcal
B'}|=2(2^r-r-1)+(2^r-1-\dim({\mathcal H}\cap \tau({\mathcal
H})))=(2^{r+1}-r-2)+2^r-r-1-\dim ({\mathcal H}\cap
\tau({\mathcal H})).$ Taking into account that $2^r-r-1-\dim
({\mathcal H}\cap \tau({\mathcal H}))$ is the distension of
$\tau$, we obtain the required.

\end{proof}

For $a\in F^r$ the set of the supports of the codewords of ${\mathcal H}'$ of weight $3$ with ones in the coordinate
indexed by $a$
 is called a {\it star} of ${\mathcal H}'$ and  denoted  by
$\mbox{Star}(a)$, i.e. we have
$$\mbox{Star}(a)=\{\{a,b,a+b\}:b\in F^r\setminus ({\bf 0}\cup a)\}.$$

Let $\tau$ be  a bijection from $F^r$ to $F^r$ such that
$\tau({\bf 0})={\bf 0}$. Since we always have $\tau({\bf 0})={\bf
0}$, we can consider that $\tau$ acts on the coordinate positions
of ${\mathcal H}'$ which are indexed by the nonzero vectors of
$F^r$ and use notation $\tau({\mathcal H}')$ throughout the text.

 Note that $\tau(\mbox{Star}(a))$ is a star of ${\mathcal H}'$
if and only if  it is $\mbox{Star}(\tau(a))$, which is equivalent
to $\tau(a+c)=\tau(a)+\tau(c)$ for all $c\in F^r$. If
$\tau(a+c)=\tau(a)+\tau(c),$ $\tau(b+c)=\tau(b)+\tau(c)$ for all
$c\in F^r$, then
$\tau(a+b+c)=\tau(a)+\tau(b+c)=\tau(a)+\tau(b)+\tau(c)=\tau(a+b)+\tau(c).$
 We conclude that if $\tau(\mbox{Star}(a))$ and
$\tau(\mbox{Star}(b))$ are stars of ${\mathcal H'}$, then
$\tau(\mbox{Star}(a+b))$ is a star of ${\mathcal H'}$. This
implies that
 the number of stars of
${\mathcal H}'$ that are mapped to stars of ${\mathcal H}'$ by
$\tau$ is always a power of two but one. Define the {\it
deficiency} of $\tau$ to be $r-\log(|\{a\in
F^r:\tau(\mbox{Star}(a))=\mbox{Star}(\tau(a))\}|+1)$.

\begin{lemma}\label{Lemma 2}
Let $\tau$ be  a bijection from $F^r$ to $F^r$ such that
$\tau({\bf 0})={\bf 0}$ and the deficiency of  $\tau$ be $k$. Then
we have
$$\mathrm{dim(Ker}(S_{{\mathcal H},\tau}))=
2^{r+1}-r-2-k.$$
\end{lemma}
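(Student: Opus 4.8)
The plan is to give an explicit description of $\Ker(S_{{\mathcal H},\tau})$ as a union of product cosets and then simply count its size. The first ingredient I would establish is the coset arithmetic of ${\mathcal H}$: since ${\mathcal H}$ is the extended Hamming code, each even weight vector $w$ lies in the unique coset ${\mathcal H}_a$ whose index $a=\sum_{c:w_c=1}c$ is the syndrome of $w$, and the syndrome map is a group isomorphism $F_0^{2^r}/{\mathcal H}\to F^r$. Hence, for $w\in{\mathcal H}_a$ and $x\in{\mathcal H}_b$, the sum $w+x$ has even weight and syndrome $a+b$, so $w+x\in{\mathcal H}_{a+b}$.

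Next I would characterize the kernel. As ${\bf 0}\in S_{{\mathcal H},\tau}$, we have $\Ker(S_{{\mathcal H},\tau})\subseteq S_{{\mathcal H},\tau}$, and since translation is a bijection, a codeword $w|z$ (with $w\in{\mathcal H}_a$, $z\in{\mathcal H}_{\tau(a)}$) belongs to the kernel if and only if $(w+x)|(z+y)\in S_{{\mathcal H},\tau}$ for every codeword $x|y\in S_{{\mathcal H},\tau}$. Writing $x\in{\mathcal H}_b$, $y\in{\mathcal H}_{\tau(b)}$, the coset arithmetic gives $w+x\in{\mathcal H}_{a+b}$ and $z+y\in{\mathcal H}_{\tau(a)+\tau(b)}$; because the product cosets ${\mathcal H}_c\times{\mathcal H}_{\tau(c)}$ are pairwise disjoint, the vector $(w+x)|(z+y)$ is a codeword precisely when $\tau(a+b)=\tau(a)+\tau(b)$. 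Thus $w|z$ lies in the kernel exactly when $\tau(a+b)=\tau(a)+\tau(b)$ for all $b\in F^r$, a condition depending only on the coset index $a$ of $w$, not on the particular codeword. Setting $A=\{a\in F^r:\tau(a+b)=\tau(a)+\tau(b)\text{ for all }b\in F^r\}$ I obtain
$$\Ker(S_{{\mathcal H},\tau})=\bigcup_{a\in A}{\mathcal H}_a\times{\mathcal H}_{\tau(a)}.$$

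It then remains to count. The equivalence noted in the text, that $\tau(\mbox{Star}(a))=\mbox{Star}(\tau(a))$ holds iff $\tau(a+c)=\tau(a)+\tau(c)$ for all $c$, identifies $A\setminus\{{\bf 0}\}$ with $\{a\in F^r:\tau(\mbox{Star}(a))=\mbox{Star}(\tau(a))\}$, while the same additivity argument used there shows $A$ is a linear subspace of $F^r$. Therefore $|A|=2^{\dim A}$, and by the definition of deficiency $k=r-\log((|A|-1)+1)=r-\dim A$, i.e. $|A|=2^{r-k}$. Since the $|A|$ product cosets in the union are disjoint and each has size $|{\mathcal H}|^2=2^{2(2^r-r-1)}$, we get
$$|\Ker(S_{{\mathcal H},\tau})|=2^{r-k}\cdot 2^{2(2^r-r-1)}=2^{2^{r+1}-r-2-k},$$
so $\dim(\Ker(S_{{\mathcal H},\tau}))=2^{r+1}-r-2-k$, as required.

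The crux of the argument is the kernel characterization: one must verify that translating $S_{{\mathcal H},\tau}$ by a codeword $w|z$ fixes the whole code exactly when the partial linearity relation $\tau(a+b)=\tau(a)+\tau(b)$ holds for every $b$. This rests entirely on the coset identity $w+x\in{\mathcal H}_{a+b}$ together with the fact that the product cosets partition $S_{{\mathcal H},\tau}$; once these are secured, the passage to the deficiency through the star criterion and the final cardinality count are routine.
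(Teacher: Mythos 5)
Your proposal is correct and follows essentially the same route as the paper: both reduce kernel membership to the coset-level condition $\tau(a+b)=\tau(a)+\tau(b)$ for all $b$ via the syndrome arithmetic ${\mathcal H}_a+{\mathcal H}_b={\mathcal H}_{a+b}$ and the disjointness of the product cosets, then translate this into the star/deficiency count. The only (cosmetic) difference is that the paper tests one representative $e_{\bf 0}+e_a|e_{\bf 0}+e_{\tau(a)}$ per coset and adds $r-k$ to $\dim({\mathcal H}\times{\mathcal H})$, whereas you describe the whole kernel as $\bigcup_{a\in A}{\mathcal H}_a\times{\mathcal H}_{\tau(a)}$ and count its cardinality.
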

\begin{proof}
It is easy to see that ${\mathcal H}\times {\mathcal H}\leq
\Ker(S_{{\mathcal H},\tau})$. We have that
\begin{equation}\label{kernel} \dim({\mathcal H}\times {\mathcal H})=2^{r+1}-2r-2.
\end{equation} Let us consider the codeword $e_{\bf 0} + e_a| e_{\bf 0} + e_{\tau(a)} \in
S_{{\mathcal H},\tau} \setminus {\mathcal H}\times {\mathcal H}$ for any $a \in F^r$.
We show that it belongs to $\Ker(S_{{\mathcal H},\tau})$ if and only if
$\tau(\mbox{Star}(a))$ is a star of ${\mathcal H}'$.  We have
 $$(e_{\bf 0} + e_a| e_{\bf 0} + e_{\tau(a)}) +
S_{{\mathcal H},\tau}$$  $$=\bigcup_{b\in F^r} (e_{\bf 0} + e_a +
{\mathcal H}_b)\times (e_{\bf 0} + e_{\tau(a)} + {\mathcal
H}_{\tau(b)})$$
$$= \bigcup_{b\in F^r} {\mathcal H}_{a+b}\times
{\mathcal H}_{\tau(a)+\tau(b)}.$$ The equality
$$ \bigcup_{b\in
F^r} {\mathcal H}_{a+b}\times {\mathcal H}_{\tau(a)+\tau(b)} =
\bigcup_{c\in F^r} {\mathcal H}_{c}\times {\mathcal H}_{\tau(c)}=
S_{{\mathcal H},\tau}$$ holds if and only if
$\tau(a)+\tau(b)=\tau(a+b)$ for any $b\in F^r$, i.e.
$\tau(\mbox{Star}(a))$ is a star of ${\mathcal H}'$. Taking into
account (\ref{kernel}) we have $$\dim(\Ker(S_{{\mathcal
H},\tau}))=\dim({\mathcal H}\times {\mathcal H}) +
\log(|\{a:\tau(\mbox{Star}(a))=\mbox{Star}(\tau(a))|+1)=$$
$$(2^{r+1}-2r-2)+(r-k)=2^{r+1}-r-2-k.$$
\end{proof}

 From Lemma \ref{Lemma 2} we see that the dimension
of the kernel of a code $S_{{\mathcal H},\tau}$ of length
$2^{r+1}-r-2$ is at least $2^{r+1}-2r-2$. If the dimension of the
kernel of $S_{{\mathcal H},\tau}$ is $2^{r+1}-2r-2$ then the code
$S_{{\mathcal H},\tau}$ could not be obtained by the Mollard
construction for propelinear codes with large ranks, see
\cite{BMRS}. Recall that two codes of length $n$ are called {\it
equivalent} if there is an automorphism of $F^n$ that maps one
code to another.

\begin{theorem}\label{Prnew}
Let $\tau$ be  a bijection from $F^r$ to $F^r$ such that
$\tau({\bf 0})={\bf 0}$ and $r$ be the deficiency of  $\tau$. Then
the codes $S_{{\mathcal H},\tau}$ and $S'_{{\mathcal H},\tau}$ are
not equivalent to any extended perfect Mollard code and perfect
Mollard code respectively.
\end{theorem}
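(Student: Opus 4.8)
The plan is to separate the two families by a single numerical invariant, the dimension of the kernel, which is preserved under code equivalence, and to show that the hypothesis on the deficiency forces this dimension to take a value that no propelinear Mollard code of the corresponding length can realize.

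First I would pin down the kernel under the hypothesis. Since the deficiency of $\tau$ equals $r$, no nonzero $a\in F^r$ satisfies $\tau(\mbox{Star}(a))=\mbox{Star}(\tau(a))$. Inspecting the proof of Lemma \ref{Lemma 2}, $\Ker(S_{{\mathcal H},\tau})$ is a union of cosets of ${\mathcal H}\times{\mathcal H}$ in $S_{{\mathcal H},\tau}$, and the coset represented by $e_{\bf 0}+e_a| e_{\bf 0}+e_{\tau(a)}$ lies in the kernel exactly when $\tau(\mbox{Star}(a))=\mbox{Star}(\tau(a))$; as no nonzero $a$ meets this condition, $\Ker(S_{{\mathcal H},\tau})={\mathcal H}\times{\mathcal H}$ and $\dim(\Ker(S_{{\mathcal H},\tau}))=2(2^r-r-1)=2^{r+1}-2r-2$, the smallest value allowed by Lemma \ref{Lemma 2}. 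Because puncturing in the coordinate fixed by $\Gamma$ alters neither rank nor kernel dimension (the remark after Corollary \ref{Rem1}), the same equality holds for $S'_{{\mathcal H},\tau}$.

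Next I would use invariance of the kernel: if $(\mathbf v,\sigma)\in\Aut(F^n)$ carries a code $C$ onto $C'$, then $x\in\Ker(C)$ if and only if $\sigma(x)\in\Ker(C')$, so $\dim(\Ker(C))=\dim(\Ker(C'))$ and equivalent codes share the same kernel dimension. It therefore suffices to prove that every extended perfect, respectively perfect, propelinear Mollard code of length $2^{r+1}$, respectively $2^{r+1}-1$, has kernel of dimension strictly larger than $2^{r+1}-2r-2$. This is the statement recalled before the theorem from \cite{BMRS}: the propelinear Mollard codes of large rank built there have kernel dimension at least $2^{r+1}-2r-1$. Combining this with the value computed above yields the asserted inequivalence for both codes at once.

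The crux is the Mollard side, namely certifying that no admissible choice of the two component perfect codes and of the superimposing map can push the kernel dimension of a propelinear Mollard code down to $2^{r+1}-2r-2$. I would establish this by describing $\Ker$ of a Mollard code through the linear structures of the superimposing map taken modulo the inner component code, as in \cite{BMRS}, and observing that the large linear block supplied by the construction already guarantees $\dim(\Ker)\ge 2^{r+1}-2r-1$. If a bare dimension count proved too coarse for some extreme choice of Mollard parameters, I would instead exploit the finer fact that $\Ker(S_{{\mathcal H},\tau})$ splits as the direct sum ${\mathcal H}\times{\mathcal H}$ of two extended Hamming codes supported on complementary halves of the coordinate set, and argue that the kernel of a Mollard code admits no such balanced decomposition, for instance by contrasting the minimum distances or the weight distributions of the two kernels.
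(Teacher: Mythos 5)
Your reduction on the $S_{{\mathcal H},\tau}$ side is correct and matches the paper: deficiency $r$ forces $\Ker(S_{{\mathcal H},\tau})={\mathcal H}\times{\mathcal H}$ of dimension $2^{r+1}-2r-2$, and kernels (hence their dimensions and their weight structure) are carried onto kernels by any equivalence. The genuine gap is on the Mollard side. Your principal route needs the claim that no extended perfect Mollard code of length $2^{r+1}$ has kernel dimension as small as $2^{r+1}-2r-2$, and you source this to the sentence preceding the theorem; but that sentence refers only to the particular Mollard construction \emph{for propelinear codes with large ranks} of \cite{BMRS}, whereas the theorem quantifies over \emph{all} Mollard codes $M(C,D)$ (propelinear or not, for every factorization $2^{r+1}=(t+1)(m+1)$ and every choice of $f$). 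For a balanced factorization, say $t=m=2^{(r+1)/2}-1$ with $r$ odd, the guaranteed linear part $Z$ has dimension only $tm=2^{r+1}-2^{(r+3)/2}+1$, far below $2^{r+1}-2r-2$, so nothing in what you cite prevents some $M(C,D)$ from having kernel dimension exactly $2^{r+1}-2r-2$; a bare dimension count cannot close the argument, and you give no proof of the needed lower bound even in the propelinear case.

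Your fallback is the right instinct but is not carried out, and the specific invariants you name would not work: both $\Ker(S_{{\mathcal H},\tau})={\mathcal H}\times{\mathcal H}$ and the kernel of an extended Mollard code (which contains $\overline{Z}$) are even-weight codes of minimum distance $4$, so minimum distance does not separate them, and no weight distributions are computed. The paper's actual argument is a sharper combinatorial property of the weight-$4$ layer, manifestly preserved under coordinate permutations: every weight-$4$ codeword of ${\mathcal H}\times{\mathcal H}$ is supported entirely in one half of the coordinates, so for each coordinate there are $2^r$ coordinates not covered together with it by any weight-$4$ kernel codeword; whereas $\overline{Z}\leq \Ker(\overline{M(C,D)})$ contains, for \emph{every} pair of coordinates, a weight-$4$ codeword (a ``rectangle'' with support $\{(i_1,j_1),(i_2,j_2),(i_2,j_1),(i_1,j_2)\}$) having ones in both. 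An analogous statement with the weight-$3$ codewords of $Z$ handles the punctured case. To repair your proof you would need to supply this (or an equivalent) structural distinction; the kernel dimension alone does not suffice for the statement as claimed.
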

\begin{proof}
By the condition of the theorem the  deficiency of the bijection
$\tau$ from $F^r$ to $F^r$ is $r$ so by Lemma \ref{Lemma 2}  the
dimension of $\Ker(S_{{\mathcal H},\tau})$ is $2(2^{r}-r-1)$.
 Since $\dim({\mathcal H})=2^{r}-r-1$, from the proof of Lemma
\ref{Lemma 2} we have  ${\mathcal H}\times {\mathcal H}\leq
\Ker(S_{{\mathcal H},\tau})$, so $\Ker(S_{{\mathcal
H},\tau})={\mathcal H}\times {\mathcal H}$. We see that the
codewords of $\Ker(S_{{\mathcal H},\tau})={\mathcal H}\times
{\mathcal H}$ of weight 4 are either $c|{\bf 0}$ or ${\bf 0}|c$
for all codewords $c$ of ${\mathcal H}$ of weight 4. In
particular, for any fixed coordinate there are $2^r$ coordinates
such that there are no codewords from $\Ker(S_{{\mathcal H},\tau})$
of weight 4 with ones in any of these positions and the fixed
coordinate simultaneously.

Consider the construction for Mollard codes. Let $C$ and $D$ be
any two perfect codes of lengths $t$ and $m$, respectively,
containing all-zero vectors. We index the coordinates of
 $F^t$ by pairs $(i,0)$, $i\in \{1,\ldots,t\}$, the coordinates of $F^m$ by pairs $(0,j)$, $j\in
 \{1,\ldots,m\}$, the coordinates of  $F^{tm}$ by pairs $(i,j)$, $i\in
\{1,\ldots,t\}$, $j\in \{1,\ldots,m\}$ and the coordinates of the
Mollard code are indexed by pairs $(i,j)$, $i\in \{0,\ldots,t\}$
$j\in \{0,\ldots,m\}$, where $i$ and $j$ are not $0$
simultaneously.

Let \, $x=(x_{(1,1)},x_{(1,2)}, \ldots,x_{(1,m)},
x_{(2,1)},\ldots,x_{(2,m)},\ldots,x_{(t,1)},\ldots,x_{(t,m)})$ \,
be a vector from $F^{tm}.$
 The
generalized parity check functions \, $p_{1}(x):F^{tm}\rightarrow
F^t$ and $p_{2}(x):F^{tm}\rightarrow F^m$ are defined as
$$(p_{1}(x))_{(i,0)}=\sum_{j=1}^{m}x_{(i,j)}, \,\, (p_{2}(x))_{(0,j)}=\sum_{i=1}^{t}x_{(i,j)}.$$

 Let $f$ be any function from
$C$ to $F^m$.  Denote by $Z$ the following set
$$\{(x,p_1(x),p_2(x)): x \in F^{tm}\}.$$
 The perfect binary Mollard code of length
$tm+t+m$, see \cite{Mollard} consists of cosets of $Z$
$$M(C,D)=\{({\bf 0}|y|z\, + \, f(y)) : y\in\,C, z
\in\,D\}+Z.$$ Obviously,
$Z$ is a subspace of $\Ker(M(C,D))$. Moreover, the kernel of the
extended Mollard code contains the extension $\overline{Z}$ of
$Z$:
$$\overline{Z}=\{(x|p_1(x)|p_2(x)|\wt(x)\mbox{ mod }2):x \in
F^{tm}\}.$$

We index the last coordinate position of $\overline{Z}$ by
$(0,0)$.

It is easy to see that for any two coordinates $\overline{Z}$
contains a unique codeword of weight 4 with ones in these
coordinates. Indeed, the set of supports of the codewords of
weight 4 from $\overline{Z}$ is

\medskip
\noindent $\{\{(i_1,j_1), \, (i_2,j_2), \, (i_2,j_1), \,
(i_1,j_2)\}:
 i_1, \, i_2\in \{0,\ldots,m\}, j_1, \,
j_2\in \{0,\ldots,t\}, i_1\neq i_2, j_1\neq j_2\}$.

\medskip
Suppose that the code $S_{{\mathcal H},\tau}$ is equivalent to the
extended perfect Mollard code $\overline{M(C,D)}$ via an
automorphism of the Hamming space. Then $\Ker(S_{{\mathcal
H},\tau})$ is necessarily equivalent to $\Ker(\overline{M(C,D)})$.
For any two coordinates of $\overline{Z}$ there is at least one
codeword of weight 4 in $\overline{Z}, \overline{Z}\leq
\Ker(\overline{M(C,D)})$ with ones in these coordinates. On the
other hand the considerations in the beginning of the proof of the
theorem imply that there are pairs of coordinate   with no vectors
from $\Ker(S_{{\mathcal H},\tau})$ of weight 4 with ones in these
coordinates. So, $S_{{\mathcal H},\tau}$ is not a Mollard code.

We now turn to the case of punctured codes. Consider the
puncturing $S_{{\mathcal H},\tau}'$ of $S_{{\mathcal H},\tau}$
defined by (\ref{code_S}).

The codewords of $\Ker(S'_{{\mathcal H},\tau})$ are obtained from
the codewords of $\Ker(S_{{\mathcal H},\tau})={\mathcal H}\times
{\mathcal H}$ by puncturing, so the codewords from
$\Ker(S'_{{\mathcal H},\tau})$ of weight 3 are $\{c|{\bf 0}: c\in
{\mathcal H}'\}$,  $\wt(c)=3$. Therefore there are at least $2^r$
coordinates of $S'_{{\mathcal H},\tau}$ that are zeros for all
codewords of weight 3 in $\Ker(S_{{\mathcal H},\tau})$. On the
other hand, the kernel of the Mollard code $M(C,D)$ contains
$Z=\{(x|p_1(x)|p_2(x)):x \in F^{tm}\}$. Then,
$\{\{(i,j),(i,0),(0,j)\}:i\in \{1,\ldots,t\},j\in\{1,\ldots,m\}\}$
are supports of some codewords of weight 3 from $\Ker(M(C,D))$.
We see that for every coordinate of $M(C,D)$ there is at least one
codeword of weight 3 from $\Ker(M(C,D))$ with one in this
coordinate. Since this property does not hold for $\Ker(S'_{{\mathcal H},\tau})$, we conclude that $\Ker(S'_{{\mathcal H},\tau})$ is not
the kernel of any Mollard code.

\end{proof}

{\bf Example 1.} Recall that {\it the dihedral group} $D_m$ is the
group formed by the symmetries
 of $m$-sized polygon.

 Consider a group that is generated by an element
 $\alpha$ of order $m$ and $\beta$ of order 2 that satisfy the
 relation $\beta\alpha\beta=\alpha^{-1}$. It is well-known that the group is isomorphic to the dihedral group $D_m$.
 Consider
  the mapping
 $T$  that fixes any element of the subgroup generated by
 $\alpha$ and sends $\beta\alpha^{i}$ to $\beta\alpha^{i+1}$, $i \in \{0,\ldots, m-1\}$. We show that $T$ is an automorphism of
 the group. Indeed, using the
generator relation  we see that
$(\beta\alpha)^2=\beta\alpha\beta\alpha=\alpha\alpha^{-1}=1$, so
 $\beta\alpha$ has order two. Moreover, we have
 $(\beta\alpha)\alpha(\beta\alpha)=(\beta\alpha^2\beta)\alpha=\alpha^{-2}\alpha=\alpha^{-1}.$
 We conclude that $T$ is an automorphism because the generator
 relation for the dihedral group for $\alpha$ and involution $\beta\alpha$ is
 fulfilled.

We now consider the regular subgroup of $\GA(r,2)$ from \cite{H}.
Let $\alpha$ be $((101),A)$ and $\beta$ be $((001),B)$, where
$A=\left(%
\begin{array}{ccc}
  0 & 1 & 0 \\
  1 & 0 & 0 \\
  1 & 0 & 1 \\
\end{array}%
\right)$, $B=\left(%
\begin{array}{ccc}
  0 & 1 & 0 \\
  1 & 0 & 0 \\
  0 & 0 & 1 \\
\end{array}%
\right)$.

We see that the orders of $\alpha$ and $\beta$ are 4 and 2
respectively. Moreover $\beta\alpha=((010),BA)$, where
$BA=\left(%
\begin{array}{ccc}
  1 & 0 & 0 \\
  0 & 1 & 0 \\
  1 & 0 & 1 \\
\end{array}%
\right)$  has order two, so $\beta\alpha\beta=\alpha^{-1}$ and the
group generated by $\alpha$ and $\beta$ is isomorphic to $D_4$. We
have the following: $\beta\alpha^2=((110),BA^2)$ and
$\beta\alpha^3=((100),BA^3)$. Consider the automorphism $T$ for
dihedral groups described above:
 $T$ fixes
 $\alpha^i$ and sends $\beta\alpha^{i}$ to $\beta\alpha^{i+1}$, $i \in \{0,1,2, 3\}$.
 Let $\tau$ be the permutation induced by the automorphism $T$.
Since $\beta=((001),B)$, $\beta\alpha=((010),BA)$,
$\beta\alpha^2=((110),BA^2)$, $\beta\alpha^3=((100),BA^3)$ the
bijection $\tau$ shifts the following vectors in the cyclic order
$(001)$, $(010)$, $(110)$, $(100)$ and fixes any other vector from
$F^3$.

 Let ${\mathcal H}$ be the code with coordinates indexed by
vectors of $F^3$ in the lexicographical order and numbers
$\{0,\ldots,7\}$ in the ascending order. Then $\tau$ is the
permutation $(4,3,2,1)$. We have the following supports of the
codewords in ${\mathcal H}$ containing 0:
\medskip

\noindent
$\{0,1,2,3\},\{0,1,4,5\},\{0,1,6,7\},\{0,2,4,6\},\{0,2,5,7\},\{0,3,4,7\},\{0,3,5,6\},\\
\{0,\ldots, 7\}.$
\medskip

Since $\tau=(4,2,3,1)$, we have the following supports of
codewords  of $\tau(\mathcal H)$ containing 0:
\medskip

\noindent
$\{0,1,3,4\},\{0,2,4,5\},\{0,4,6,7\},\{0,2,3,6\},\{0,3,5,7\},\{0,1,2,7\},\{0,1,5,6\},\\
\{0,\ldots, 7\}.$
\medskip

 The supports of the codewords of ${\mathcal H}$
and $\tau(\mathcal H)$ not containing $0$ are the comple\-ments of
those that contain $0$ to $\{0,\ldots,7\}$.
 Then $\tau({\mathcal H})\cap {\mathcal H}$
consists of the all-zero and the all-one vectors, so $\tau$ has
the distension 3. The deficiency of $\tau$ is 3 because the codes
$\tau({\mathcal H'})$ and ${\mathcal H'}$ do not have common
codewords of weight 3. By Lemmas \ref{Lemma 1} and \ref{Lemma 2}
the code $S_{{\mathcal H},\tau}$ is a propelinear extended perfect
code of length 16, rank 14 and the kernel dimension  8.

\section{Infinite series of new propelinear perfect codes}

 In this section we construct an infinite series of
propelinear codes $S_{\mathcal H,\tau}$ of prefull rank and the
dimension of the kernel $2^{r+1}-2r-2$, i.e. the maximum rank and
the minimum dimension of kernel that we can obtain by the
considered construction. In view of Theorem \ref{Prnew} these
codes are new propelinear codes.

\begin{lemma}\label{Lemma 3}
 Let $\tau$ be  a bijection from $F^{r_1}$ to $F^{r_1}$ such that
$\tau({\bf 0})={\bf 0}$ and $\sigma$ be  a bijection from
$F^{r_2}$ to $F^{r_2}$, $\sigma({\bf 0})={\bf 0}$  with the
distensions $l_1$ and $l_2$  and the deficiencies $k_1$ and $k_2$
respectively. Then the  bijection $\tau|\sigma$ from $F^{r_1+r_2}$
to $F^{r_1+r_2}$ has the distension $l_1+l_2$ and the deficiency
$k_1+k_2$.

\end{lemma}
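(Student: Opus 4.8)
The plan is to reduce each of the two invariants to the dimension of a subspace attached to the bijection, and then to show that passing from $\tau$ and $\sigma$ to $\tau|\sigma$ replaces that subspace by a direct product, so that dimensions add.

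For the deficiency I would work with the set $L(\tau)=\{a\in F^{r_1}:\tau(a+c)=\tau(a)+\tau(c)\ \forall c\in F^{r_1}\}$. The computation preceding Lemma~\ref{Lemma 2} already shows that $L(\tau)$ is closed under addition and contains ${\bf 0}$, so it is a linear subspace, and for $a\neq{\bf 0}$ membership in $L(\tau)$ is exactly the condition $\mathrm{Star}(\tau(a))=\tau(\mathrm{Star}(a))$; hence the deficiency of $\tau$ equals $r_1-\dim L(\tau)$, and likewise for $\sigma$. It then remains to prove $L(\tau|\sigma)=L(\tau)\times L(\sigma)$. The inclusion $\supseteq$ is immediate, and for $\subseteq$ one sets the second block of the test vector $c$ to ${\bf 0}$ (using $\sigma({\bf 0})={\bf 0}$) to force the $F^{r_1}$-part of any $a$ in $L(\tau|\sigma)$ into $L(\tau)$, and symmetrically for the other block. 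Additivity of dimension then gives deficiency $(\tau|\sigma)=k_1+k_2$.

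The distension is the harder part, and here I would pass to dual codes. Directly from the definition, ${\mathcal H}^{\perp}$ is spanned by the all-one vector and the coordinate functions $a\mapsto a_i$, i.e. it is the evaluation code of affine functions on $F^r$. Since a coordinate permutation commutes with orthogonality and preserves dimension, the identity $\dim({\mathcal H}\cap\tau({\mathcal H}))=2\dim{\mathcal H}-2^{r}+\dim({\mathcal H}^{\perp}\cap\tau({\mathcal H}^{\perp}))$ turns the distension of a bijection of $F^r$ into $r+1-\dim A(\tau)$, where $A(\tau):={\mathcal H}^{\perp}\cap\tau({\mathcal H}^{\perp})$ consists of the affine $g$ for which $g\circ\tau$ is again affine. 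Writing $g(a,b)=\langle u,a\rangle+\langle v,b\rangle+\epsilon$ on $F^{r_1}\times F^{r_2}$, the composition $g\circ(\tau|\sigma)$ equals $\langle u,\tau(a)\rangle+\langle v,\sigma(b)\rangle+\epsilon$, and the key observation is that a sum of a function of $a$ and a function of $b$ is affine on the product exactly when each summand is affine (restrict one block to ${\bf 0}$). This separates the condition into $\langle u,\cdot\rangle\in A(\tau)$ and $\langle v,\cdot\rangle\in A(\sigma)$. Accounting for the single shared constant term, this yields $\dim A(\tau|\sigma)=\dim A(\tau)+\dim A(\sigma)-1$, whence distension $(\tau|\sigma)=(r_1+r_2+1)-\dim A(\tau|\sigma)=l_1+l_2$.

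The main obstacle I anticipate is the distension computation: one must set up the dual identification of ${\mathcal H}^{\perp}$ with the affine evaluation code carefully, prove the separation of affine functions over a product domain, and---most easily overlooked---track the single constant term correctly, since it is the source of the $-1$ that makes $r_1+1$ and $r_2+1$ collapse to $r_1+r_2+1$ rather than $r_1+r_2+2$. The deficiency half, by contrast, is essentially bookkeeping once $L(\tau)$ is recognized as a subspace and the product structure $L(\tau|\sigma)=L(\tau)\times L(\sigma)$ is established.
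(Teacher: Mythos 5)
Your proposal is correct, and for the distension it takes a genuinely different route from the paper. The paper's proof stays on the primal side: it writes a codeword $x\in{\mathcal H}$ of length $2^{r_1+r_2}$ via its support $\{a_i|b_i\}$, observes that $(\tau|\sigma)(x)\in{\mathcal H}$ iff the two block sums $\sum_i\tau(a_i)$ and $\sum_i\sigma(b_i)$ both vanish, and reads off additivity of the distension from this splitting of the parity-check conditions (the dimension identity actually displayed there, equating $\dim((\tau|\sigma)({\mathcal H})\cap{\mathcal H})$ to the sum of the two small intersections' dimensions, does not balance numerically as printed --- the intended content is that the \emph{codimension} of the intersection inside ${\mathcal H}$ is the sum of the two codimensions). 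Your dual formulation makes exactly that codimension count transparent: ${\mathcal H}^{\perp}\cap\tau({\mathcal H}^{\perp})$ is the space of affine $g$ with $g\circ\tau$ affine, and your separation lemma for affine functions of $(a,b)$ that split as a sum of a function of $a$ and a function of $b$, together with the bookkeeping of the shared constant term giving $\dim A(\tau|\sigma)=\dim A(\tau)+\dim A(\sigma)-1$, yields $l_1+l_2$ cleanly; the price is setting up the identification of ${\mathcal H}^{\perp}$ with the affine evaluation code, which you do correctly. For the deficiency the two arguments are essentially the same computation: the paper enumerates the stars $\mathrm{Star}(a|b)$ mapped to stars by a three-way case analysis on whether $a$ or $b$ is $\mathbf{0}$, arriving at $(2^{r_1-k_1}-1)(2^{r_2-k_2}-1)+(2^{r_1-k_1}-1)+(2^{r_2-k_2}-1)=2^{r_1+r_2-k_1-k_2}-1$, while you package the same count as $\dim L(\tau|\sigma)=\dim L(\tau)+\dim L(\sigma)$ for the subspace $L(\tau)$ of points of additivity; your version is slightly tidier and uses $\sigma(\mathbf{0})=\mathbf{0}$ in exactly the right place (setting one block of the test vector to $\mathbf{0}$).
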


\begin{proof}

Consider a codeword $x$ of the extended Hamming code ${\mathcal
H}$ of length $2^{r_1+r_2}$, whose coordinates are indexed by the
vectors $a|b$, $a\in F^{r_1}$, $b\in F^{r_2}$. The bijection
$\tau|\sigma$ acts on the vectors of  $F^{r_1+r_2}$, so it could
be treated as a permutation on the coordinate positions of
${\mathcal H}$.

A vector $x\in F^{2^{r_1+r_2}}$ is in ${\mathcal H}$ if and only
if
$$\mbox{supp}(x)=\{a_i|b_i: i \in
\{1,\ldots,\wt(x)\}\}\mbox{ is such that
}\sum_{i\in\{1,\ldots,\wt(x)\}}a_i|b_i={\bf 0}.$$


The vector $(\tau|\sigma)(x)$ with the support
$\{\tau(a_i)|\sigma(b_i):i\in\{1,\ldots,\wt(x)\}\}$ is in
${\mathcal H}$ if and only if
$$\sum_{i\in\{1,\ldots,\wt(x)\}}\tau(a_i)={\bf 0}\mbox{ and }
\sum_{i\in\{1,\ldots,\wt(x)\}}\sigma(b_i)={\bf 0}.$$ In other
words, $(\tau|\sigma)(x)$ is in ${\mathcal H}$ if and only if the
vectors with the supports $\{\tau(a_i):i\in\{1,\ldots,\wt(x)\}\}$
and $\{\sigma(b_i):i\in\{1,\ldots,\wt(x)\}\}$ are codewords of
${\mathcal H}^{r_1}$ and ${\mathcal H}^{r_2}$ respectively. We
conclude that $\dim((\tau|\sigma)({\mathcal H})\cap {\mathcal
H})=\dim(\tau({\mathcal H}^{r_1})\cap {\mathcal
H}^{r_1})+\dim(\sigma({\mathcal H}^{r_2})\cap {\mathcal H}^{r_2})
,$ so the distension of $\tau|\sigma$ is $l_1+l_2$.

For a nonzero vector $a|b\in F^{r_1+r_2}$ consider
$\mbox{Star}(a|b)$:
$$ \{\{a|b,c'|c'',a+c'|b+c''\}:(c',c'')\in F^{r_1+r_2}\setminus (a|b\cup {\bf 0})\}.$$
So, $(\tau|\sigma)(\mbox{Star}(a|b))$ is
$$ \{\{\tau(a)|\sigma(b),\tau(c')|\sigma(c''),\tau(a+c')|\sigma(b+c'')\}:(c',c'')\in F^{r_1+r_2}\setminus (a|b\cup {\bf 0})\}.$$

From this expression we see that the set
$(\tau|\sigma)(\mbox{Star}(a|b))$ is a star of ${\mathcal H'}$ if
and only if $\tau(a)+\tau(c')+\tau(a+c')={\bf 0}$ and
$\sigma(b)+\sigma(c'')+\sigma(b+c'')={\bf 0}$ for $c'\in F^{r_1},
c''\in F^{r_2}$. In other words, we have
$(\tau|\sigma)(\mbox{Star}(a|b))\subset{\mathcal H}'$ if and only
if
$$a,b\neq {\bf 0} \mbox{ and } \tau(\mbox{Star}(a))\subset ({\mathcal H}^{r_1})'\mbox{ and }
\sigma(\mbox{Star}(b))\subset ({\mathcal H}^{r_2})'\mbox{ or }$$
$$a={\bf 0},\mbox{ }b\neq {\bf 0}\mbox{ and }\sigma(\mbox{Star}(b))\subset ({\mathcal
H}^{r_2})'\mbox{ or}$$
$$b={\bf 0}, a\neq {\bf 0}\mbox { and }
\tau(\mbox{Star}(a))\subset ({\mathcal H}^{r_1})'.$$

 We conclude that there are total $(2^{r_1-k_1}-1)(2^{r_2-k_2}-1)+2^{r_1-k_1}-1+2^{r_2-k_2}-1=2^{r_1+r_2-k_1-k_2}-1$
 stars in ${\mathcal H}'$
 that are mapped to stars in ${\mathcal H}'$ by $\tau|\sigma$.
 So, the deficiency of $\tau|\sigma$ is $k_1+k_2$.

\end{proof}

{\bf Direct product construction for regular subgroups of the
general affine group \cite{Mog}.} Let $G_1$ and $G_2$ be regular
subgroups of $\GA(r_1,2)$ and $\GA(r_2,2)$
respectively. Given two elements $g_1=(a,A)\in G_1$ and
$g_2=(b,B)\in G_2$ consider the following element of
$\GA(r_1+r_2,2)$, which we denote by $g_1|g_2$:
$$(a|b,\left(%
\begin{array}{cc}
  A &  {\bf 0}_{r_1,r_2} \\
 {\bf 0}_{r_2, r_1} & B \\
\end{array}%
\right)),$$ here ${\bf 0}_{r_1,r_2}$ and
 ${\bf 0}_{r_2, r_1}$ are the all-zero $r_1\times r_2$ and  $r_2\times
 r_1$
matrices respectively. It is easy to see that $\{g_1|g_2:g_1\in
G_1, g_2\in G_2\}$ is a regular subgroup of $\GA(r_1+r_2,2)$. We
denote this group by $G_1 \otimes G_2$.

 Consider automorphisms $T$ and $S$ of $G_1$ and
$G_2$ respectively with the induced permutations $\tau$ and
$\sigma$ respectively. Define the following permutation $T|S$ of the
elements of  $G_1\otimes G_2$: $(T|S)(g_1|g_2)=T(g_1)|S(g_2)$.
Obviously, $T|S$ is an automorphism of the group $G_1\otimes G_2$
and the permutation  induced by $T|S$ is
$\tau|\sigma$.

\begin{theorem}\label{Theorem 2}
Let  $T$ and $S$ be automorphisms of regular subgroups $G_1$ and
$G_2$ of $\GA(r_1,2)$ and $\GA(r_2,2)$ respectively. Let $\tau$ and
$\sigma$ be the
 permutations induced by $T$ and $S$  with the distensions $l_1$ and $l_2$  and the
deficiencies $k_1$ and $k_2$ respectively. Then $S_{{\mathcal
H},\tau|\sigma}$ is a propelinear extended perfect code of length
$2^{r_1+r_2+1}$ with  $\rank(S_{{\mathcal H},\tau|\sigma})=
2^{r_1+r_2+1}-r_1-r_2-2+l_1+l_2$ and $\dim(\Ker(S_{{\mathcal
H},\tau}))= 2^{r_1+r_2+1}-r_1-r_2-2-k_1-k_2$.

\end{theorem}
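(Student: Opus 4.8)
The plan is to assemble Theorem \ref{Theorem 2} directly from the three preceding results, since the hypotheses have been engineered precisely so that each ingredient applies. The bijection in question is $\tau|\sigma$, the permutation induced by the automorphism $T|S$ of the regular subgroup $G_1\otimes G_2$ of $\GA(r_1+r_2,2)$. First I would invoke the direct product construction recalled just above: the group $G_1\otimes G_2$ is a regular subgroup of $\GA(r_1+r_2,2)$, and $T|S$ is an automorphism of it whose induced permutation is exactly $\tau|\sigma$. This identification is the conceptual linchpin, because it places $\tau|\sigma$ in the class of permutations covered by Theorem \ref{Theorem 1}.

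With that in hand, the propelinearity and the length follow immediately. Setting $r=r_1+r_2$, the code $S_{{\mathcal H},\tau|\sigma}$ is $S_{{\mathcal H},\rho}$ for $\rho=\tau|\sigma$ a permutation induced by an automorphism of a regular subgroup of $\GA(r,2)$. Theorem \ref{Theorem 1} then gives that it is a propelinear extended perfect binary code of length $2^{r+1}=2^{r_1+r_2+1}$, which is the first assertion.

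It remains to compute the two invariants, and here the work has already been done in Lemma \ref{Lemma 3}. That lemma asserts that $\tau|\sigma$ has distension $l_1+l_2$ and deficiency $k_1+k_2$. I would simply substitute these into the general formulas for arbitrary ${\bf 0}$-fixing bijections. Lemma \ref{Lemma 1} gives $\rank(S_{{\mathcal H},\tau|\sigma})=(2^{r+1}-r-2)+l$ where $l$ is the distension; with $r=r_1+r_2$ and $l=l_1+l_2$ this is $2^{r_1+r_2+1}-r_1-r_2-2+l_1+l_2$. Likewise Lemma \ref{Lemma 2} gives $\dim(\Ker(S_{{\mathcal H},\tau|\sigma}))=2^{r+1}-r-2-k$ with $k=k_1+k_2$ the deficiency, yielding $2^{r_1+r_2+1}-r_1-r_2-2-k_1-k_2$. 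One small caveat is that Lemmas \ref{Lemma 1} and \ref{Lemma 2} are stated for arbitrary bijections fixing ${\bf 0}$, so I must confirm $\tau|\sigma$ fixes ${\bf 0}$, which is immediate since $\tau({\bf 0})={\bf 0}$ and $\sigma({\bf 0})={\bf 0}$ force $(\tau|\sigma)({\bf 0}|{\bf 0})={\bf 0}|{\bf 0}$.

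There is essentially no genuine obstacle here: the theorem is a packaging statement, and the only point demanding care is the bookkeeping of which result supplies which conclusion. The main substantive content—that the distensions and deficiencies add under the operation $|$—lives entirely in Lemma \ref{Lemma 3}, whose proof decomposes the membership condition $(\tau|\sigma)(x)\in{\mathcal H}$ into independent conditions on the two coordinate blocks and counts the preserved stars combinatorially. Assuming that lemma, the remaining step is a direct citation of Theorem \ref{Theorem 1} together with arithmetic substitution into Lemmas \ref{Lemma 1} and \ref{Lemma 2}, so I would present the proof as three short invocations rather than any new argument.
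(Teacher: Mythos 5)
Your proposal is correct and follows exactly the paper's own argument: identify $\tau|\sigma$ as the permutation induced by the automorphism $T|S$ of the regular subgroup $G_1\otimes G_2$ of $\GA(r_1+r_2,2)$ (so Theorem \ref{Theorem 1} gives propelinearity), then substitute the additivity of distension and deficiency from Lemma \ref{Lemma 3} into the formulas of Lemmas \ref{Lemma 1} and \ref{Lemma 2}. The paper's proof is precisely these three invocations, so there is nothing to add.
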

\begin{proof}
The regular subgroup $G_1 \otimes G_2$ of $\GA(r_1+r_2,2)$ obtained by direct
product construction has the automorphism $T|S$ with the induced
permutation $\tau|\sigma$, so $S_{{\mathcal H},\tau|\sigma}$ is a
propelinear extended perfect code.  The desired values for the
rank and the dimension of the kernel of the code $S_{{\mathcal
H},\tau|\sigma}$ follow from Lemmas \ref{Lemma 1}, \ref{Lemma 2}
and \ref{Lemma 3}.

\end{proof}

 Table 1 contains
the values for the distension and the deficiency for the
permuta\-tions induced by the  automorphisms of the regular
subgroups of $\GA(r,2)$ for $3\leq r\leq 5$. This result was
obtained by computer. \begin{table}[h]
\noindent\begin{tabular}{|c|c|}
  \hline
   & The values for $(l,k)$, where $l$ is the distension and \\
   & $k$ is the deficiency of the permutations induced by \\
   &  the automorphisms of the regular subgroups of $\GA(r,2)$ \\

  \hline
  $r=3$ & (0,0),(1,2),(2,3),(3,3) \\
  $r=4$ &  (0,0),(1,2),(2,3),(2,4),(3,3),(3,4),(4,3)\\
  $r=5$ &  (0,0),(1,2),(1,4),(2,3),(2,4),(2,5),(3,3),(3,4),(3,5),(4,3),(4,4),(4,5),(5,4),(5,5)\\
  \hline
\end{tabular}
\caption{}
\end{table}
For any length $n$, $n\geq 16$ Theorem \ref{Theorem 2} and the
data from Table 1 imply the existence of propelinear extended
perfect codes of length $n$ of varying rank and kernel and, in
particular, codes of prefull rank $n-1$ and codes with the
dimension of the kernel $n-2\log n -2$.

\begin{corollary}\label{Corollary 1}
For any $r\equiv0,2 \pmod 3$, $r\geq 3$ there is a propelinear
extended perfect code $S_{{\mathcal H},\tau}$ of length $2^{r+1}$,
$\rank(S_{{\mathcal H},\tau}) = 2^{r+1}-2$ and
$\dim(\Ker(S_{{\mathcal H},\tau}))=2^{r+1}-2r-2$. For any
$r\equiv1\pmod 3$, $r\geq 3$ there is a propelinear extended
perfect code $S_{{\mathcal H},\tau}$
 of length $2^{r+1}$, $\rank(S_{{\mathcal H},\tau}) = 2^{r+1}-2$ with $\dim(\Ker(S_{{\mathcal H},\tau}))=
2^{r+1}-2r-1$ and there is a propelinear extended perfect code
$S_{{\mathcal H},\tau}$ of $\rank(S_{{\mathcal H},\tau}) =
2^{r+1}-3$ and $\dim(\Ker(S_{{\mathcal
H},\tau}))=2^{r+1}-2r-2$.

\end{corollary}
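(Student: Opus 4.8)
The plan is to build the required codes by iterating the direct product construction of Theorem~\ref{Theorem 2}, taking as building blocks the automorphisms of regular subgroups of $\GA(r_i,2)$ for $r_i\in\{3,4,5\}$ whose distension--deficiency pairs are recorded in Table~1. First I would promote Theorem~\ref{Theorem 2} from two factors to $m$ factors. The direct product $\otimes$ of regular subgroups is associative, since both groupings amount to placing the matrices in block-diagonal form and concatenating the translation parts, and the permutation induced by $T_1|\cdots|T_m$ is $\tau_1|\cdots|\tau_m$. Hence, by induction on $m$ and applying Lemma~\ref{Lemma 3} at each step, the bijection $\tau_1|\cdots|\tau_m$ of $F^{r}$ with $r=r_1+\cdots+r_m$ has distension $\sum_i l_i$ and deficiency $\sum_i k_i$, where $(l_i,k_i)$ is the pair of $\tau_i$. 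Consequently, by Lemmas~\ref{Lemma 1} and~\ref{Lemma 2}, the code $S_{\mathcal H,\tau_1|\cdots|\tau_m}$ is a propelinear extended perfect code of length $2^{r+1}$ with rank $2^{r+1}-r-2+\sum_i l_i$ and kernel dimension $2^{r+1}-r-2-\sum_i k_i$.

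It then remains to realize the target invariants, which in these terms read: rank $2^{r+1}-2$ is equivalent to $\sum_i l_i=r$, rank $2^{r+1}-3$ to $\sum_i l_i=r-1$, kernel dimension $2^{r+1}-2r-2$ to $\sum_i k_i=r$, and kernel dimension $2^{r+1}-2r-1$ to $\sum_i k_i=r-1$. From Table~1 the relevant blocks are the pair $(3,3)$ for $r_i=3$ (the automorphism of Example~1), the pair $(5,5)$ for $r_i=5$, and the pairs $(4,3)$ and $(3,4)$ for $r_i=4$. I would then split into residues modulo $3$. For $r\equiv0\pmod3$, write $r$ as a sum of $r/3$ copies of $3$ and take $(3,3)$ in each, obtaining $\sum_i l_i=\sum_i k_i=r$. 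For $r\equiv2\pmod3$, so $r\ge5$, write $r=5+3\cdot\tfrac{r-5}{3}$ and combine $(5,5)$ with copies of $(3,3)$, again giving $\sum_i l_i=\sum_i k_i=r$. In both cases this yields a code of rank $2^{r+1}-2$ and kernel dimension $2^{r+1}-2r-2$, as required.

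For $r\equiv1\pmod3$, so $r\ge4$, write $r=4+3\cdot\tfrac{r-4}{3}$ and combine one size-$4$ block with copies of $(3,3)$. Choosing $(4,3)$ for the size-$4$ block gives $\sum_i l_i=r$ and $\sum_i k_i=r-1$, hence a code of rank $2^{r+1}-2$ and kernel dimension $2^{r+1}-2r-1$; choosing instead $(3,4)$ gives $\sum_i l_i=r-1$ and $\sum_i k_i=r$, hence a code of rank $2^{r+1}-3$ and kernel dimension $2^{r+1}-2r-2$. This produces both codes asserted in the residue-$1$ case.

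The one point that needs genuine care, as opposed to bookkeeping, is the $m$-factor strengthening of Theorem~\ref{Theorem 2}: one must verify that the iterated direct product $G_1\otimes\cdots\otimes G_m$ is again a regular subgroup of $\GA(r,2)$ and that $T_1|\cdots|T_m$ is an automorphism of it whose induced permutation is exactly $\tau_1|\cdots|\tau_m$, so that the output of one step can be fed to Lemma~\ref{Lemma 3} as a single factor at the next step. The additivity of distension and deficiency is already combinatorial in Lemma~\ref{Lemma 3}; the nontrivial input is thus the propelinearity carried by the direct product structure. Everything else reduces to matching $\sum_i l_i$ and $\sum_i k_i$ to the prescribed rank and kernel dimension, and in the cases with $\sum_i k_i=r$ the resulting codes are new by Theorem~\ref{Prnew}.
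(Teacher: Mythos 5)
Your proposal is correct and follows essentially the same route as the paper: the authors likewise write $r=3m+r_2$ with $r_2\in\{3,4,5\}$, iterate the direct product of copies of the Example~1 group (distension and deficiency $3$) with one block of size $r_2$ carrying the pairs $(5,5)$, $(4,3)$ or $(3,4)$ from Table~1, and invoke Lemmas~\ref{Lemma 1}--\ref{Lemma 3} and Theorem~\ref{Theorem 2}. The arithmetic matching $\sum_i l_i$ and $\sum_i k_i$ to $r$ or $r-1$ checks out in every residue class.
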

\begin{proof}

We fix $\tau:F^3\rightarrow F^3$ to be the permutation induced by
the automorphism of the regular subgroup $G_1$ considered in the
Example 1. The permutation $\tau$ has
the distension and the deficiency 3.
 We vary the permutation $\sigma$ among the
permutations that are induced by the automorphisms of the regular
subgroups $G_2$ of $\GA(r_2,2)$ for $r_2: 3\leq r_2\leq 5$ with the
maximum distensions and deficiencies. According to Example 1 and
Table 1, for $r_2=3$ or $5$ there are permutations with both the
distension and the deficiency equal $3$ or $5$ respectively. For
$r_2=4$ there are permutations with the distension 4 and the
deficiency 3 and the distension 3 and the deficiency 4.

 Let $r$ be $3m+r_2$ for some
$m\geq 0$ and $3\leq r_2\leq 5$. There is a
 regular subgroup of $\GA(3m,2)$ which is the direct product of the
 $m$ copies of $G_1$: $G_1\otimes\ldots \otimes G_1$ with the
 permutation $\tau|\ldots|\tau$ induced by the  automorphism
 $T|\ldots|T$, where $T$ is the  automorphism from
 Example 1. Using the direct product construction again we obtain a
 regular subgroup $G_1\otimes\ldots \otimes G_1\otimes G_2$ of $\GA(3m+r_2,2)$ that has an automorphism with the induced permutation
 $\tau|\ldots|\tau|\sigma$. By Theorem \ref{Theorem 2} for $r\equiv0,2 \pmod 3$ the code $S_{{\mathcal H},\tau|\ldots|\tau|\sigma}$ of length $2^{r+1}$ has
 rank $2^{r+1}-1$ and $\dim(\Ker(S_{{\mathcal
H},\tau|\ldots|\tau|\sigma)})=2^{r+1}-2r-2$ and for $r\equiv1\pmod
3$ the code $S_{{\mathcal H},\tau|\ldots|\tau|\sigma}$ has rank
$2^{r+1}-2$ and $\dim(\Ker(S_{{\mathcal
H},\tau|\ldots|\tau|\sigma})=2^{r+1}-2r-1$ or rank $2^{r+1}-3$ and
$\dim(\Ker(S_{{\mathcal
H},\tau|\ldots|\tau|\sigma}))=2^{r+1}-2r-2$.
\end{proof}

{\bf Remark 2.} By  Corollary \ref{Corollary 1} there are the
propelinear codes of length $2^{r+1}-1$ with the dimension of the
kernel $2^{r+1}-2r-2$. Taking into account Theorem \ref{Prnew} and
Lemma 2, for any
 $r\geq 3$ there are
propelinear perfect codes of length $2^{r+1}-1$ that could not be
obtained by Mollard construction.

{\bf Remark 3.} Applying Theorem \ref{Theorem 2} iteratively we
obtain a relatively large class of nonequiva\-lent propelinear
perfect codes of any admissible length more than 7.  For example,
using data from Table 1, we obtained propelinear perfect codes of
length $511$  with the size of kernels more than 497 of all
possible ranks with the only exception of the full rank. Among
these codes at least 32 codes are pairwise nonequivalent as the
calculated values for the pairs of rank and the dimension of the
kernel  are different. Note that there are 51 different pairs of
the rank and the dimension of kernel for the perfect codes of
length 511 with the  dimension of the kernel at least 497, see 
\cite{AHS}.

\bigskip

\end{document}